\newtheorem{thm}{Theorem}[section]
\newtheorem{conjecture}[thm]{Conjecture}
\newtheorem{lemma}[thm]{Lemma}
\newtheorem{prop}[thm]{Proposition}
\theoremstyle{definition}
\newtheorem{defn}[thm]{Definition}
\newtheorem{ex}[thm]{Example}
\theoremstyle{rem}
\newtheorem{rem}[thm]{Remark}
\numberwithin{equation}{subsection}
\newtheorem{notation}[thm]{Notation}
\renewcommand{\emptyset}{\varnothing}
\newcommand{\etc}{\,\ldots}
\begin{document}

\title[The tautological ring of the moduli space $M_{2,n}^{rt}$]
{The tautological ring of the moduli space $M_{2,n}^{rt}$}
\author[M. Tavakol]{Mehdi Tavakol}
   \address{Department of Mathematics,
              KTH, 100 44 Stockholm, Sweden}
   \email{tavakol@math.kth.se}

\maketitle
\centerline {\bf Introduction.}
\vskip 1cm

\subsection{Overview}
The moduli space $M_{2,n}^{rt}$ parameterizes the isomorphism classes of stable $n$-pointed curves of genus two with rational tails. This is an open subset of the space $\overline{M}_{2,n}$ and can be considered as a partial compactification of the moduli space $M_{2,n}$, parameterizing smooth $n$-pointed curves of genus two. The degenerated curves parameterized by the moduli space are stable $n$-pointed curves of arithmetic genus two having one component of genus two while the other components are all rational. 
The study of the whole Chow ring $A^*(M_{2,n}^{rt})$ of $M_{2,n}^{rt}$ appears to be a difficult question.
Therefore, we restrict ourselves to the study of the tautological algebra $R^*(M_{2,n}^{rt})$, which is a smaller part of the intersection ring of the moduli space and has an interesting conjectural structure. 
The \emph{tautological classes} on the moduli space of curves are originally defined by Mumford in [M] in his study of the intersection theory of the $Q$-varieties $M_g$ and $\overline{M}_g$. 
The tautological rings are defined to be subalgebras of the rational Chow rings generated by the tautological classes.
Faber has computed the tautological rings in many cases and in [F2] gives a conjectural description of the tautological ring of the moduli spaces of curves. He predicts that the tautological algebras behave like the algebraic cohomology ring of a nonsingular projective variety. In particular, they should be Gorenstein algebras and satisfy the Hard Lefschetz and Hodge Positivity properties with respect to a certain ample class.    

We begin this article by recalling the definition of the moduli space $M_{g,n}^{rt}$ and its tautological ring. We state the related conjectures concerning the structure of $R^*(M_{g,n}^{rt})$. 
Then we focus on the special case $g=2$. For a fixed curve $X$ of genus two and a natural number $n$, there is the notion of the tautological ring $R^*(X^n)$. This ring is defined and studied by Faber and Pandharipande. 
We study this ring and we show that $R^*(X^n)$ is a Gorenstein algebra. As we will see, there are two essential relations in $R^2(X^3)$ and $R^3(X^6)$ which play an important role in our analysis of the tautological rings $R^*(X^n)$. 
The first relation is closely related to the relations discovered by Faber and Belorousski-Pandharipande on different moduli spaces. This gives the result of Theorem \ref{C} which was also proven by Faber-Pandharipande in their unpublished joint work.

To find the connection between $R^*(X^n)$ and the tautological ring of the space $M_{2,n}^{rt}$, we consider the reduced fiber of $\pi:\overline{M}_{2,n} \rightarrow \overline{M}_2$ over the moduli point $[X] \in M_2$, 
which is the Fulton-MacPherson compactification $X[n]$ of the configuration space $F(X,n)$. There is a natural way to define the tautological ring for the space $X[n]$. The study of this algebra shows that it is Gorenstein as well. We conclude this note by showing that there is an isomorphism between the tautological ring of $X[n]$ and $R^*(M_{2,n}^{rt})$. This gives the following result:
\begin{thm}\label{rt}
The tautological ring $R^*(M_{2,n}^{rt})$ is a Gorenstein algebra with socle in degree $n$.
\end{thm}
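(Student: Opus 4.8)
The plan is to deduce the theorem from two ingredients established in the body of the paper: first, that the tautological ring $R^*(X[n])$ of the Fulton--MacPherson space $X[n]$ of a fixed smooth genus-two curve $X$ is Gorenstein with socle in degree $n$; and second, that there is a graded ring isomorphism $R^*(X[n]) \cong R^*(M_{2,n}^{rt})$. Granting both, the Gorenstein property and the location of the socle transfer verbatim across the isomorphism, which is exactly the assertion. It is worth noting that the socle degree $n$ is consistent from the fibered point of view: $X[n]$ arises as the fiber over $[X]\in M_2$ of the forgetful map $\pi\colon M_{2,n}^{rt}\to M_2$, the fiber has dimension $n$, and the base $M_2$ contributes trivially to the tautological ring (its tautological ring vanishes in positive degree, socle degree $0$), so the total socle degree should be $n+0=n$, even though $\dim M_{2,n}^{rt}=n+3$.

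To establish the Gorenstein property of $R^*(X[n])$, I would exploit the presentation of $X[n]$ as an iterated blow-up of $X^n$ along the (strict transforms of the) diagonals. The blow-up formula for Chow groups exhibits $R^*(X[n])$ as a module over the image of $R^*(X^n)$, generated by the classes of the exceptional divisors, and since $R^*(X^n)$ is already known to be Gorenstein with its structure controlled by the two essential relations in $R^2(X^3)$ and $R^3(X^6)$, the task is to track how each exceptional contribution distributes across the graded pieces and to verify that the resulting intersection pairing is perfect. Because $X[n]$ is smooth and projective of dimension $n$, this amounts to checking Poincaré duality \emph{within} the tautological subring, with the socle landing in the top degree $n$; I expect this to be a controlled, if lengthy, bookkeeping computation organized by the combinatorics of the diagonal blow-ups.

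The substantive step is the isomorphism $R^*(X[n]) \cong R^*(M_{2,n}^{rt})$, realized through the restriction homomorphism $R^*(M_{2,n}^{rt}) \to R^*(X[n])$ attached to the inclusion of the fiber of $\pi$ over $[X]$. Surjectivity is the easy direction: the generating tautological classes on $M_{2,n}^{rt}$ (the $\psi$-classes together with the diagonal and boundary classes) restrict to the corresponding generators of $R^*(X[n])$. For injectivity I would use the triviality of $R^{>0}(M_2)$ together with a Leray--Hirsch-type argument, so that no tautological class on the total space is detected purely in the base direction and the restriction to a single fiber is faithful on the tautological ring. The main obstacle is precisely the verification that no relation is lost or gained under restriction: one must show that every tautological relation on $M_{2,n}^{rt}$ is already visible on one fiber and, conversely, that the fiberwise relations---which descend ultimately from the genus-two relations in $R^2(X^3)$ and $R^3(X^6)$---lift to genuine relations on $M_{2,n}^{rt}$. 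This matching of the two presentations is where the genus-two--specific relations carry the weight of the argument, and it is the part I expect to require the most care.
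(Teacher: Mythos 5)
Your overall architecture is exactly that of the paper: prove that $R^*(X[n])$ is Gorenstein with socle in degree $n$ (via Keel's blow-up formula, a standard-monomial bookkeeping, and reduction to the perfect pairing on $R^*(X^S)$ coming from the relations in $R^2(X^3)$ and $R^3(X^6)$), and then transfer the statement through an isomorphism $F^*\colon R^*(M_{2,n}^{rt})\to R^*(X[n])$ whose surjectivity is immediate on generators. The one place you diverge is the injectivity of $F^*$, and there your first suggestion would not work: triviality of $R^{>0}(M_2)$ plus a Leray--Hirsch-type argument does not show that restriction to a single fiber is faithful, because a tautological class on $M_{2,n}^{rt}$ could a priori restrict to zero on the fiber over $[X]$ without vanishing (Leray--Hirsch presupposes the freeness over the base that is essentially the point at issue, and $\pi\colon M_{2,n}^{rt}\to M_2$ is not a situation where that input is available). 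Moreover, the direction ``every tautological relation on $M_{2,n}^{rt}$ is already visible on one fiber'' needs no proof at all --- it is just the surjectivity of $F^*$. The only thing that has to be checked, and the only thing the paper checks, is the converse you mention at the end: each generator of the ideal of relations presenting $R^*(X[n])$ already holds among the corresponding classes on $M_{2,n}^{rt}$; given surjectivity, this forces $F^*$ to be an isomorphism. Concretely this is a finite list of verifications ($\psi_1^2=0$ on $M_{2,1}^{rt}$ from Faber's relation on $\overline{M}_{2,1}$, Getzler's genus-two topological recursion for $a_1b_{1,2}=0$ and $b_{1,2}^2=-4a_1a_2$, the Faber and Belorousski--Pandharipande relations for $b_{1,2}b_{1,3}=a_1b_{2,3}$, the relation in $R^3(C_2^6)$ for the six-point relation, and the blow-up/excess-intersection relations for the $D_I$). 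So your plan is essentially the paper's, provided you drop the Leray--Hirsch step and carry out the relation-by-relation lifting you correctly identify as the crux.
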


\subsection{Tautological rings}

Let  $\overline{M}_{g,n}$ be the moduli space of stable curves of genus $g$ with $n$ marked points. In [FP2] the system of tautological rings is defined to 
be the set of smallest $\mathbb{Q}$-subalgebras of the Chow rings, $$R^*(\overline{M}_{g,n}) \subset A^*(\overline{M}_{g,n}),$$satisfying the following two properties:
 
\begin{itemize}

\item
The system is closed under push-forward via all maps forgetting markings: $$\pi_*:R^*(\overline{M}_{g,n}) \rightarrow R^*(\overline{M}_{g,n-1}).$$

\item
The system is closed under push-forward via all gluing maps: 
$$\iota_*: R^*(\overline{M}_{g_1,n_1 \cup \{*\}}) \otimes  R^*(\overline{M}_{g_2,n_2 \cup \{ \bullet \}})  \rightarrow R^*(\overline{M}_{g_1+g_2,n_1+n_2}),$$
$$\iota_*: R^*(\overline{M}_{g,n \cup \{ *,\bullet \}}) \rightarrow R^*(\overline{M}_{g+1,n}),$$ with attachments along the markings $*$ and $\bullet.$
\end{itemize}
Let us recall the definition of the moduli space $M_{g,n}^{rt}$ from [F4]: For $g \geq 2$, it is the inverse image of $M_g$ under the natural morphism $\overline{M}_{g,n} \rightarrow \overline{M}_g$. 
For $g=1$, one has to consider the inverse image of $M_{1,1}$ under the morphism $\overline{M}_{1,n} \rightarrow \overline{M}_{1,1}$, and $M_{0,n}^{rt}=\overline{M}_{0,n}$. 
One can also define $M_{g,n}^{rt}$ as follows:  
To every $n$-pointed stable curve $(C;x_1, \etc,x_n)$ there is an associated dual graph. Its vertices correspond to the irreducible components of $C$ and edges correspond to intersection of components. 
The curve $C$ has rational tails if there is one component whose genus is equal to $g$ and all other components are isomorphic to the projective line $\mathbb{P}^1$. The moduli space $M_{g,n}^{rt}$
parametrizes the isomorphism classes of stable $n$-pointed curves of genus $g$ with rational tails.

The tautological ring, $R^*(M_{g,n}^{rt}) \subset A^*(M_{g,n}^{rt})$, for the moduli space $M_{g,n}^{rt}$, is defined to be the image of $R^*(\overline{M}_{g,n})$ via the natural map, 
$$R^*(\overline{M}_{g,n}) \subset A^*(\overline{M}_{g,n}) \rightarrow A^*(M_{g,n}^{rt}).$$

\subsection{Evaluations}
The quotient ring $R^*(M_{g,n}^{rt})$ admits a canonical non-trivial linear evaluation $\epsilon$ to $\mathbb{Q}$ obtained by integration involving the Chern classes of the Hodge bundle.

Recall that the Hodge bundle $\mathbb{E}$ on $\overline{M}_g$ for $g>1$ (resp. $\overline{M}_{1,1}$ for $g=1$), is the locally free $Q$-sheaf of rank $g$  defined by  $\mathbb{E}=\pi_* \omega$, 
where $\pi:\overline{M}_{g,1} \rightarrow \overline{M}_g$ (resp. $\pi:\overline{M}_{1,2} \rightarrow \overline{M}_{1,1}$) is the universal curve of genus $g$ and $\omega$ denotes its relative dualizing sheaf. 
The Hodge bundle on $\overline{M}_{g,n}$ is defined as the pull-back of $\mathbb{E}$ via the natural projection 
$\pi:\overline{M}_{g,n} \rightarrow \overline{M}_g$ for $g>1$ 
(resp. $\pi:\overline{M}_{1,n} \rightarrow \overline{M}_{1,1}$ for $g=1$)
and is denoted by the same letter.
The fiber of $\mathbb{E}$ over a moduli point $[(C;x_1, \dots,x_n)]$ is the $g$-dimensional vector space $H^0(C,\omega_C)$. 
The class $\lambda_i$ on $\overline{M}_{g,n}$ is defined to be the $i^{th}$ Chern class $c_i(\mathbb{E})$ of the Hodge bundle.  
The class $\psi_i$ is the pull back $\sigma_i^*(K)$ of $K$ along $\sigma_i:\overline{M}_{g,n} \rightarrow \overline{M}_{g,n+1}$, 
where $\sigma_1, \etc,\sigma_n$ are the natural sections of the map $\pi:\overline{M}_{g,n+1} \rightarrow \overline{M}_{g,n}$, which forgets the last marking on the curve and stabilizes. It is the first Chern class of the cotangent line bundle $\mathbb{L}_i$ on the moduli space whose fiber at the moduli point $[(C;x_1, \etc,x_n)]$ is the cotangent space to $C$ at the $i^{th}$ marking.
The class $\kappa_i$ on $\overline{M}_{g,n}$ is defined to be the push-forward $\pi_*(\psi_{n+1}^{i+1})$, where the projection
$\pi:\overline{M}_{g,n+1} \rightarrow \overline{M}_{g,n}$ is defined as above.

It is proven in [F2] that for $g>0$ the class $\lambda_{g-1} \lambda_g$ vanishes when restricted to the complement of the open subset $M_{g,n}^{rt}$. This leads to an evaluation $\epsilon$ on $A^*(M_{g,n}^{rt})$:
$$\xi \mapsto \epsilon(\xi)=\int_{\overline{M}_{g,n}} \xi \cdot \lambda_{g-1} \lambda_{g}.$$

There is an interesting connection between the non-triviality of the pairing above and the tautological ring of $M_g$: In [F1] Faber studies the algebra $R^*(M_g)$
and conjectures that it is a Gorenstein ring with socle in degree $g-2$. The Hodge integrals 
\begin{equation} \tag{1}\label{Int} \int_{\overline{M}_{g,n}} \psi_1^{\alpha_1} \dots \psi_n^{\alpha_n} \lambda_{g-1} \lambda_g \end{equation}
determine the top intersection pairings in $R^*(M_g)$. The study of the tautological ring of $M_g$ in [F1] led Faber to the following conjecture for the integrals \eqref{Int}:
$$\int_{\overline{M}_{g,n}} \psi_1^{\alpha_1} \dots \psi_n^{\alpha_n} \lambda_{g-1} \lambda_{g}= \frac{(2g+n-3)! (2g-1)!!}{(2g-1)! \prod_{i=1}^n (2 \alpha_i -1)!!} \int_{\overline{M}_{g,1}} \psi_1^{g-1} \lambda_{g-1} \lambda_{g},$$
where $g \geq 2$ and $\alpha_i \geq 1$. In [GP] it is shown that the degree zero Virasoro conjecture applied to $\mathbb{P}^2$ implies this prediction. The constant 
$$\int_{\overline{M}_{g,1}} \psi_1^{g-1} \lambda_{g-1} \lambda_{g}=\frac{1}{2^{2g-1}(2g-1)!!}\frac{|B_{2g}|}{2g}$$
has been calculated by Faber, who shows that it follows from Witten's conjecture. The non-vanishing of the Bernoulli number $B_{2g}$ shows the non-triviality of the evaluation $\epsilon$. For more details about the role of Hodge integrals in the study of the tautological rings and its connection with Virasoro constraints in Gromov-Witten theory, see [FP1] or [F3].

\subsection{Gorenstein Conjectures}
Recall the following conjectures by Faber-Pandharpiande:
\begin{conjecture} \label{mg} $R^*(M_{g,n}^{rt})$ is Gorenstein with socle in degree $g-2+n-\delta_{0g}.$ \end{conjecture}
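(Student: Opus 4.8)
The plan is to prove Conjecture \ref{mg} by bootstrapping from the genus-two case (Theorem \ref{rt}) through the fibration $\pi: M_{g,n}^{rt} \to M_g$ (for $g \geq 2$), whose reduced fiber over a point $[X] \in M_g$ is the Fulton--MacPherson space $X[n]$ of the configuration space $F(X,n)$, of dimension $n$. The decisive numerical observation is that the target socle degree splits as $g-2+n = (g-2)+n$: the summand $g-2$ is the conjectural socle degree of the base ring $R^*(M_g)$, and $n = \dim X[n]$ is the top degree of the fiber ring. I would therefore aim to realize $R^*(M_{g,n}^{rt})$ as a graded tensor product of a Gorenstein base and a Gorenstein fiber, so that both the Gorenstein property and the additivity of socle degrees follow formally. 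The case $g=0$ is immediate and separate: $M_{0,n}^{rt} = \overline{M}_{0,n}$ is smooth projective with $R^*=A^*$, and Poincar\'e duality gives the Gorenstein property with socle in the top degree $n-3 = g-2+n-\delta_{0g}$. The case $g=1$ runs over the base $M_{1,1}$ by the same scheme, the reduced fiber now having dimension $n-1$, which matches the conjectured socle degree $n-1$ since $R^*(M_{1,1})=\mathbb{Q}$.

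The first main step is the fiber analysis for general $g$: for a fixed smooth curve $X$ of genus $g \geq 2$, define $R^*(X^n)$ and $R^*(X[n])$ as in the genus-two treatment and prove that $R^*(X[n])$ is Gorenstein with socle in degree $n$. I would first handle $R^*(X^n)$, which is generated by the pulled-back canonical classes $K_i$ and the diagonals $\Delta_{ij}$, and then transport the result to $X[n]$ via its description as an iterated blow-up of $X^n$ along the proper transforms of the diagonals, using the blow-up formula for Chow groups. The crucial input is the correct generalization of the two essential relations that drive the genus-two computation, namely those in $R^2(X^3)$ and $R^3(X^6)$. For higher genus these must be replaced by relations reflecting $\dim H^0(X,K_X)=g$ and the vanishing of the Gross--Schoen modified-diagonal cycles; here the description of the tautological ring of the Jacobian via the Fourier--Mukai transform (Polishchuk, Moonen) should supply both the relations and the exact Poincar\'e pairing on $R^*(X^n)$.

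The second main step is the gluing, a relative Leray--Hirsch statement. Concretely, I would show that the $\lambda$-classes generate $\pi^* R^*(M_g)$ inside $R^*(M_{g,n}^{rt})$, that the remaining tautological generators (the $\psi_i$, the $\kappa_j$, and the classes of the rational-tail boundary divisors) restrict to a basis of the fiber ring $R^*(X[n])$, and that $R^*(M_{g,n}^{rt})$ is free over $\pi^* R^*(M_g)$ on these fiber classes. The evaluation $\epsilon$ defined above, given by integration against $\lambda_{g-1}\lambda_g$, would then factor as the base socle pairing (the Hodge integral \eqref{Int}, nonzero by the Bernoulli-number computation) times the fiber Poincar\'e pairing. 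Since a tensor product of a Gorenstein algebra with socle in degree $g-2$ and one with socle in degree $n$ is Gorenstein with socle in degree $g-2+n$, the assertion follows.

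I expect the decisive obstacles to be twofold. First, the base ring $R^*(M_g)$ is required to be Gorenstein with socle in degree $g-2$, and this is precisely Faber's conjecture for $M_g$, open in general; so the argument as stated would prove Conjecture \ref{mg} only conditionally on, or simultaneously with, the $M_g$ case. Second, and more seriously, the relative Leray--Hirsch freeness is not automatic: $\pi$ is not a fiber bundle, the fibers $X[n]$ degenerate over the boundary of $M_g$ and over loci with extra automorphisms, and there is no a priori reason that the ideal of tautological relations decomposes as a sum of a base ideal and a fiber ideal. Controlling the mixed relations, those coupling the $\lambda$-classes to the $\psi$- and $\kappa$-classes, so that the top pairing genuinely factors, is the heart of the matter; when it fails the tensor-product structure, and with it the Gorenstein property, can break, which is exactly the phenomenon responsible for the failure of the analogous conjecture on the full compactification $\overline{M}_{g,n}$. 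Finding the higher-genus analogues of the two essential relations and proving that they suffice is the concrete form this obstacle takes, and it is where the genus-two hyperelliptic geometry does not generalize directly.
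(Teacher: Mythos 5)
You have not proved the statement, and strictly speaking there is no proof in the paper to compare against: Conjecture \ref{mg} is stated as an open conjecture, and the paper's actual theorem is only its $g=2$ case (Theorem \ref{rt}), with $g=0$ due to Keel, $g=1$ due to [T1], and $n=0$, $g\leq 23$ verified by Faber. Your proposal is, by your own admission, a conditional program rather than a proof: it assumes (i) that $R^*(M_g)$ is Gorenstein with socle in degree $g-2$, which is precisely Faber's conjecture for $M_g$ and open in general, and (ii) a relative Leray--Hirsch freeness of $R^*(M_{g,n}^{rt})$ over $\pi^*R^*(M_g)$, with the remaining generators restricting to a basis of the fiber ring $R^*(X[n])$, for which you offer no argument and for which there is no formal reason. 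The mixed relations coupling $\kappa$-, $\lambda$- and $\psi$-classes --- already visible in Looijenga's study of $C_g^n$ and in the nontrivial combinatorial factors in the Hodge integrals \eqref{Int}, which show the socle pairing is not a naive product of a base pairing and a fiber pairing --- are exactly what would have to be controlled, and you correctly flag that this is where the tensor-product structure can break. But naming the obstruction is not the same as overcoming it, so the proposal has a genuine gap at both hinges.

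It is worth seeing why the paper's $g=2$ argument meets neither obstacle, since this is also why it does not generalize along your lines. For $g=2$ the base ring is trivial: $\kappa_i=0$ on $M_2$ for $i\geq 1$ (Mumford, recalled in Remark \ref{tc}), so $R^*(M_2)=\mathbb{Q}$ and there is nothing to tensor with; the restriction $F^*:R^*(M_{2,n}^{rt})\rightarrow R^*(X[n])$ to a single fiber is an isomorphism outright. Its injectivity is established not by any freeness statement but by lifting every fiber relation to the moduli space one by one, using genus-two-specific inputs: Faber's relation in $A^2(\overline{M}_{2,1})$ giving $\psi_1^2=0$ on $M_{2,1}^{rt}$, Getzler's genus-2 topological recursion relation, the relation \eqref{F} in $R^2(C_2^3)$ together with the Belorousski--Pandharipande relation, and the rank-5 bundle $\mathbb{E}_3=\pi_*(\omega^{\otimes 3})$ producing the degree-three relation \eqref{6}; perfectness of the fiber pairing then rests on the Hanlon--Wales eigenvalue analysis of the intersection matrices. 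For $g\geq 3$ none of these ingredients has an established analogue, and whether your proposed replacements (Gross--Schoen modified diagonals, the Fourier--Mukai description of the tautological ring of the Jacobian) suffice to give a perfect pairing on $R^*(X^n)$ for a fixed curve of higher genus is itself open. Your architecture and the socle bookkeeping $g-2+n=(g-2)+n$ are the right heuristics, and they degenerate correctly to the paper's proof when the base ring collapses; but as it stands the proposal reduces Conjecture \ref{mg} to two harder open statements rather than proving it, which is consistent with the paper itself leaving the general case as a conjecture.
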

\begin{conjecture} \label{bar} $R^*(\overline{M}_{g,n})$ is Gorenstein with socle in degree $3g-3+n$. \end{conjecture}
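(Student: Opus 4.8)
The plan is to prove the two halves of the Gorenstein property separately: that the top graded piece $R^{3g-3+n}(\overline{M}_{g,n})$ is one-dimensional, and that the intersection pairing
$$R^k(\overline{M}_{g,n}) \times R^{3g-3+n-k}(\overline{M}_{g,n}) \longrightarrow R^{3g-3+n}(\overline{M}_{g,n}) \cong \mathbb{Q}$$
is perfect in every degree $k$. Since $\overline{M}_{g,n}$ is a smooth proper Deligne--Mumford stack of dimension $3g-3+n$, the degree map $\int_{\overline{M}_{g,n}}$ furnishes the candidate evaluation on the socle, and the product on $R^*$ then supplies the pairing; all of the content lies in the nondegeneracy.

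First I would establish one-dimensionality of the socle. Using the additive generation of $R^*(\overline{M}_{g,n})$ by decorated boundary strata---classes $\xi_{\Gamma *}(\prod_v \psi^{a} \kappa^{b})$ pushed forward along the gluing maps $\xi_\Gamma: \overline{M}_\Gamma \to \overline{M}_{g,n}$---one checks that in the top degree every such generator is supported on a zero-dimensional stratum and is therefore a rational multiple of the class of a single maximally degenerate stable curve. Showing that all these point classes agree up to scalar in $R^{3g-3+n}$, so that the evaluation is nonzero and the socle is exactly $\mathbb{Q}$, is the first step.

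Second, for the perfect pairing I would induct on the pair $(g,n)$, using the stratification of $\overline{M}_{g,n}$ by topological type. Each boundary stratum is a product $\overline{M}_\Gamma = \prod_v \overline{M}_{g_v, n_v}$ of smaller moduli spaces, on which the inductive hypothesis provides Gorenstein duality. The restriction and self-intersection formulas for boundary divisors---the excess class at a node being $-\psi' - \psi''$, and $\psi$- and $\kappa$-classes restricting predictably under $\xi_\Gamma$---reduce a global pairing computation to pairings on the strata. I would then attempt to assemble these local dualities into an explicit dual basis for $R^k(\overline{M}_{g,n})$ sitting inside $R^{3g-3+n-k}(\overline{M}_{g,n})$, verifying that the resulting Gram matrix of the pairing is invertible.

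The main obstacle is precisely this assembly, and it is where the statement passes from technical to genuinely open. To verify even the numerical symmetry $\dim R^k = \dim R^{3g-3+n-k}$ that Gorenstein duality forces, one must control the full ideal of tautological relations, i.e.\ the kernel of the surjection from the free algebra on decorated strata onto $R^*(\overline{M}_{g,n})$. Pixton's relations are conjectured to generate this ideal, but their completeness is unproven, so the numbers $\dim R^k$ are not in general computable, and without them the nondegeneracy cannot be certified. I therefore expect the argument to close only under an additional input---either a theorem that the known relations are complete, or a direct geometric construction of the dual basis---and to be in serious doubt in high genus, where the expected Gorenstein symmetry is not guaranteed to survive. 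This is why the statement is recorded here as a conjecture rather than proved, with the present paper securing instead the analogous, tractable duality on the rational-tails space.
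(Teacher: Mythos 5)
The statement you were given is Conjecture \ref{bar} of the paper: it is recorded as a conjecture of Faber--Pandharipande, and no proof of it is offered or claimed anywhere in the text, so there is no proof to compare your argument against. Your assessment is accurate on both counts: the first half of your sketch, the one-dimensionality of the socle $R^{3g-3+n}(\overline{M}_{g,n}) \cong \mathbb{Q}$, is indeed a known theorem, while the perfection of the pairing requires control of the full ideal of tautological relations (equivalently, the dimensions $\dim R^k$), which is not available, and you correctly stop short of claiming a proof. This matches the paper's treatment exactly: the statement stands as a conjecture, and what the paper actually proves is the analogous Gorenstein property for the rational-tails space, namely Theorem \ref{rt} for $R^*(M_{2,n}^{rt})$, via the explicit pairing analysis on $X^n$ and $X[n]$. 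Your closing doubt is also well placed: subsequent work of Petersen and Tommasi showed that Conjecture \ref{bar} in fact fails for $\overline{M}_{2,n}$ once $n$ is large enough, so no completion of your inductive assembly of dual bases is possible in general, and the obstruction you identified---incompleteness of the known relations---is precisely where the expected symmetry breaks.
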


Hain and Looijenga introduce a compactly supported version of the tautological algebra: The algebra $R^*_c(M_{g,n})$ is defined to be the set of elements in 
$R^*(\overline{M}_{g,n})$ that restrict trivially to the Deligne-Mumford boundary. This is a graded ideal in $R^*(\overline{M}_{g,n})$ and the intersection product defines a map 
$$R^*(M_{g,n}) \times R^{*}_c(M_{g,n}) \rightarrow R^{*}_c(M_{g,n})$$ that makes $R^*_c(M_{g,n})$ a $R^*(M_{g,n})$-module. 
In [HL] they formulated the following conjecture for the case $n=0$:
\begin{conjecture}\label{hl}
(A) The intersection pairings $$R^k(M_g) \times R^{3g-3-k}_c(M_g) \rightarrow R_c^{3g-3}(M_g) \cong \mathbb{Q}$$ are perfect for $k \geq 0.$

(B) In addition to (A), $R^*_c(M_g)$ is a free $R^*(M_g)$-module of rank one. 
\end{conjecture}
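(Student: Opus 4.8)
The plan is to read part (A) as an instance of Gorenstein (Macaulay) duality and to reduce its content to the Gorenstein property of $R^*(\overline{M}_g)$, i.e.\ to Conjecture \ref{bar} in the case $n=0$. Recall the elementary algebraic fact: if $A$ is a graded Artinian Gorenstein $\mathbb{Q}$-algebra with one-dimensional socle in top degree $N$ and $I \subset A$ is any homogeneous ideal, then multiplication induces a perfect pairing between $A/I$ and the annihilator $\mathrm{Ann}_A(I) = \{a \in A : aI = 0\}$, matching $(A/I)^k$ with $\mathrm{Ann}_A(I)^{N-k}$. I would apply this with $A = R^*(\overline{M}_g)$, $N = 3g-3$, the evaluation $R^{3g-3}(\overline{M}_g) \cong \mathbb{Q}$ given by integration, and $I = \ker\bigl(R^*(\overline{M}_g) \to R^*(M_g)\bigr)$ the ideal of boundary classes, so that $A/I = R^*(M_g)$. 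Because a codimension-$(3g-3)$ class restricts to zero on the lower-dimensional boundary, the socle $R^{3g-3}(\overline{M}_g)$ equals $R^{3g-3}_c(M_g) \cong \mathbb{Q}$. With this dictionary, part (A) follows once two things are supplied: the Gorenstein property of $R^*(\overline{M}_g)$, and the identification $R^*_c(M_g) = \mathrm{Ann}_A(I)$.

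The identification is the geometric heart of the argument. Write $R^*_c(M_g) = \bigcap_i \ker \iota_i^*$ for the classes restricting to zero on every boundary divisor, where the $\iota_i$ are the gluing maps with sources $\overline{M}_{g-1,2}$ and $\overline{M}_{g_1,1} \times \overline{M}_{g_2,1}$. The inclusion $R^*_c(M_g) \subseteq \mathrm{Ann}_A(I)$ is immediate from the projection formula: if all $\iota_i^*\alpha = 0$, then for any $\beta = (\iota_i)_*\gamma$ one has $\alpha\cdot\beta = (\iota_i)_*(\iota_i^*\alpha \cdot \gamma) = 0$, and classes of this form span $I$. For the reverse inclusion I would argue by contradiction: suppose $\alpha \in \mathrm{Ann}_A(I)$ but $\iota_i^*\alpha \neq 0$ for some $i$. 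Granting that the tautological ring of that boundary divisor is Gorenstein, its integration pairing is perfect, so there is a class $\gamma$ with $\int \iota_i^*\alpha \cdot \gamma \neq 0$; then $\beta = (\iota_i)_*\gamma \in I$ satisfies $\int_{\overline{M}_g}\alpha\beta = \int \iota_i^*\alpha\cdot\gamma \neq 0$, contradicting $\alpha \in \mathrm{Ann}_A(I)$. This reduces the reverse inclusion, and with it part (A), to the Gorenstein property of the boundary-divisor tautological rings --- instances of Conjecture \ref{bar} for smaller parameters, the product case reducing to its one-pointed factors since a tensor product of Gorenstein algebras is Gorenstein.

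For part (B) I would again invoke Gorenstein duality. Once (A) holds, $R^*_c(M_g) = \mathrm{Ann}_A(I)$ is canonically the module $\mathrm{Hom}_A(A/I, A) \cong \mathrm{Hom}_{\mathbb{Q}}(A/I, \mathbb{Q})$, the canonical (Matlis dual) module of $B := R^*(M_g) = A/I$, where the last isomorphism uses $A \cong \mathrm{Hom}_{\mathbb{Q}}(A, \mathbb{Q})$. This canonical module is free of rank one over $B$ exactly when $B$ is Gorenstein; hence, granting (A), part (B) is equivalent to the assertion that $R^*(M_g)$ is Gorenstein with socle in degree $g-2$, i.e.\ to Conjecture \ref{mg} for $n=0$. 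Concretely, by Looijenga's vanishing $R^*_c(M_g)$ is concentrated in degrees $2g-1$ through $3g-3$ with one-dimensional bottom piece $R^{2g-1}_c(M_g)$, and for any generator $\theta$ the $R^*(M_g)$-linear map $x \mapsto x\cdot\theta$ is an isomorphism onto $R^*_c(M_g)$ precisely when $R^*(M_g)$ is Gorenstein.

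The main obstacle is that every input to this scheme is a deep open problem, and some neighbouring statements are now known to be false. Conjecture \ref{bar} is the most demanding of the Faber--Pandharipande predictions and in fact fails at the tautological level for $\overline{M}_{g,n}$ with many markings by the Petersen--Tommasi phenomena, so the inductive passage through marked boundary strata cannot be taken for granted; a genuine proof would have to restrict attention to the low-marking divisors $\overline{M}_{g-1,2}$ and $\overline{M}_{g_i,1}$ that actually occur and control them directly. Part (B) additionally forces the separate, still-open Gorenstein property of $R^*(M_g)$ through Conjecture \ref{mg}. In the genus-two setting of this paper all of these inputs degenerate to small, established cases --- indeed $R^*(M_2) = \mathbb{Q}$, so both parts hold trivially --- which is consistent with, but far weaker than, the general statement; it is the uniform control of the Gorenstein property across all boundary divisors as $g$ grows that remains out of reach.
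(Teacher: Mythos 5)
You have set out to prove something that the paper itself does not prove and does not claim to prove: the statement labelled \ref{hl} is the Hain--Looijenga \emph{conjecture}, quoted from [HL] as part of the background. It is open; the paper's actual contribution is Theorem \ref{rt}, the genus-two, rational-tails analogue (Conjecture \ref{mg} for $g=2$), and the only proof-like content the paper records about \ref{hl} is Faber's equivalence result from [F4] that Conjectures \ref{mg} and \ref{bar} hold for all $(g,n)$ if and only if the freeness statement (D) holds for all $(g,n)$. Your proposal, as you yourself concede in the final paragraph, is exactly such a conditional reduction: part (A) is derived from the Gorenstein property of $R^*(\overline{M}_g)$ and of the tautological rings of the boundary divisors $\overline{M}_{g-1,2}$ and $\overline{M}_{g_1,1}\times\overline{M}_{g_2,1}$ (instances of the open Conjecture \ref{bar}), and part (B) from Conjecture \ref{mg} with $n=0$. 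Since every input is an unproven conjecture --- and, as you note, the Gorenstein prediction for $\overline{M}_{g,n}$ is now known to fail for large $n$ by Petersen--Tommasi, so the ambient scheme cannot be salvaged wholesale --- the proposal has no unconditional content. That is the genuine gap: it is a (reasonable, and essentially known) equivalence in the spirit of [F4], not a proof, and there is no paper proof for it to be measured against.

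Within the reduction itself there is also a step you treat as formal that is not. You need the ideal $I=\ker\bigl(R^*(\overline{M}_g)\to R^*(M_g)\bigr)$ to be \emph{spanned by pushforwards of tautological classes from the boundary}: the inclusion of boundary pushforwards into $I$ is immediate from the definition of the tautological system, but the reverse containment is the right-exactness of the tautological excision sequence, a nontrivial fact that must be quoted (it is addressed in [FP2]/[F4], not derivable from the projection formula). Without it, your identification $R^*_c(M_g)=\mathrm{Ann}_A(I)$ and hence the Macaulay-duality mechanism for (A) breaks down, because an element of $\mathrm{Ann}_A(I)$ need not kill classes supported on the boundary that fail to be tautological pushforwards. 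Likewise, $R^{3g-3}(\overline{M}_g)\cong\mathbb{Q}$ together with the nondegeneracy of the socle pairing is itself part of Conjecture \ref{bar}, not an independently available evaluation; and your use of Looijenga's vanishing to pin down the degree range of $R^*_c(M_g)$ in (B) already presupposes (A). So even as a conditional argument the write-up has load-bearing citations missing; as a proof of the stated conjecture it does not get off the ground, which is the honest conclusion your own last paragraph reaches.
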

There is a generalization of the notion of the compactly supported tautological algebra to the space $M_{g,n}^{rt}$: In [F4] Faber defines $R^*_c(M_{g,n}^{rt})$ as the set of elements in $R^*(\overline{M}_{g,n})$
that restrict trivially to $\overline{M}_{g,n} \backslash M_{g,n}^{rt}$. He considers the following generalization of the conjectures above:
\begin{conjecture} \label{HL}
(C) The intersection pairings 
$$R^k(M_{g,n}^{rt}) \times R^{3g-3+n-k}_c(M_{g,n}^{rt}) \rightarrow R_c^{3g-3+n}(M_{g,n}^{rt}) \cong \mathbb{Q}$$ are perfect for $k \geq 0.$

(D) In addition to C, $R^*_c(M_{g,n}^{rt})$ is a free $R^*(M_{g,n}^{rt})$-module of rank one. 
\end{conjecture}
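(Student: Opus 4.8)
The plan is to reduce the statement to a computation on a single fibre of the projection $\pi\colon\overline{M}_{2,n}\to\overline{M}_2$ and to transport there a Gorenstein property. Fix a general smooth curve $X$ of genus two; the reduced fibre of $\pi$ over $[X]\in M_2$ is the Fulton--MacPherson compactification $X[n]$ of $F(X,n)$, a smooth projective variety of dimension $n$. Restriction of classes yields a ring homomorphism $r\colon R^*(M_{2,n}^{rt})\to R^*(X[n])$ onto a tautological subring of $A^*(X[n])$, and the first task is to prove that $r$ is an isomorphism. Surjectivity is clear, since the natural generators of $R^*(X[n])$ (point classes, diagonals, $\psi$-classes, and the exceptional classes of the compactification) are restrictions of tautological classes on $M_{2,n}^{rt}$. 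For injectivity I would use that the tautological ring of the base is trivial in positive degree, $R^{>0}(M_2)=0$ (the case $g=2$ of Looijenga's and Faber's theorem, Gorenstein with socle in degree $g-2=0$); combined with the fibration structure of $M_{2,n}^{rt}\to M_2$, this forces every tautological class to be detected on a general fibre, and it makes the evaluation $\epsilon(\xi)=\int_{\overline{M}_{2,n}}\xi\,\lambda_1\lambda_2$ factor through $r$ followed by integration over $X[n]$.

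Thus it suffices to prove that $R^*(X[n])$ is Gorenstein with socle in its top degree $n$. Since $X[n]$ is an iterated blow-up of $X^n$ along the (strict transforms of the) diagonals, I would first settle the analogous question for the Faber--Pandharipande ring $R^*(X^n)$ of a fixed genus-two curve, whose top degree is likewise $n=\dim X^n$. Here the two essential relations advertised in the overview enter: the codimension-two relation in $R^2(X^3)$ (of Faber / Belorousski--Pandharipande type) and the codimension-three relation in $R^3(X^6)$. Together with the elementary identities ($p_i^2=0$, $\Delta_{ij}p_i=\Delta_{ij}p_j=p_ip_j$, $K_i=2p_i$, and the self-intersection formula $\Delta_{ij}^2=-\Delta_{ij}K_i$), I would pull these two relations back along the projections $X^n\to X^m$ and symmetrise over $S_n$, using them to put every monomial in the generators into a normal form and thereby exhibit an explicit basis of $R^*(X^n)$ in each degree, with $R^n(X^n)\cong\mathbb{Q}$. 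Comparing with dual bases, the evaluation pairing $R^i(X^n)\times R^{n-i}(X^n)\to R^n(X^n)\cong\mathbb{Q}$ is then checked to be perfect.

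With $R^*(X^n)$ understood, I would lift the Gorenstein property to $X[n]$ through the blow-up formula. Each blow-up is along a smooth centre (a partial diagonal) whose normal bundle is built from tensor powers of the canonical class of $X$, so the projective-bundle description presents $R^*(X[n])$ as $R^*(X^n)$ augmented by the exceptional classes modulo the Chern-class relations of these normal bundles. Tracking how the top class and the pairing change under a single blow-up—the exceptional divisor contributes a self-dual block and pushes nothing above degree $n$—I would conclude by induction on the number of blow-ups that $R^*(X[n])$ remains Gorenstein with socle in degree $n$. By the reduction of the first paragraph, the same then holds for $R^*(M_{2,n}^{rt})$, which is the assertion and which matches Conjecture~\ref{mg}.

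I expect the main obstacle to be the exact determination of $R^*(X^n)$: proving that the two relations in $R^2(X^3)$ and $R^3(X^6)$, together with the elementary ones, generate \emph{all} relations for every $n$, so that the proposed normal form really is a basis and the pairing is nondegenerate rather than merely well defined. I would attack this by induction on $n$ using the projections $X^n\to X^{n-1}$ and their sections, reducing any higher-codimension relation to these generators, and I would confirm that the resulting Hilbert function has the symmetry $\dim R^i=\dim R^{n-i}$ demanded by a perfect pairing, cross-checking the top intersection numbers against the evaluations \eqref{Int}.
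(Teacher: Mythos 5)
There is a genuine mismatch here: the statement you were asked to prove is Conjecture \ref{HL}, which concerns the \emph{compactly supported} tautological algebra $R^*_c(M_{g,n}^{rt})$ --- the ideal of classes in $R^*(\overline{M}_{g,n})$ restricting trivially to $\overline{M}_{g,n}\backslash M_{g,n}^{rt}$ --- paired against $R^*(M_{g,n}^{rt})$ into total degree $3g-3+n$, for \emph{all} $g$ and $n$. Your argument never mentions $R^*_c$ and is confined to $g=2$; what you actually sketch is the one-sided Gorenstein property of $R^*(M_{2,n}^{rt})$ with socle in degree $g-2+n=n$, i.e.\ Conjecture \ref{mg} for $g=2$ (Theorem \ref{rt} of the paper), as your own closing sentence concedes. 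These are different assertions living in different degrees ($n$ versus $n+3$ when $g=2$), and the paper deliberately leaves \ref{HL} as a conjecture. The only bridge it records, from [F4], is that part (D) for all $(g,n)$ is equivalent to Conjectures \ref{mg} \emph{and} \ref{bar} holding for all $(g,n)$; so even granting everything in your outline you would still need the Gorenstein property of $R^*(\overline{M}_{g,n})$, and the statement for every genus, before (C) or (D) follows. Nothing in your proposal engages with $R^*_c$, with the socle $R_c^{3g-3+n}\cong\mathbb{Q}$, or with the module structure required for (D).

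As a sketch of Theorem \ref{rt} itself, your outline does follow the paper's route (restrict to the fibre $X[n]$, prove $R^*(X^n)$ Gorenstein from the two essential relations, lift through the blow-ups via Keel's formula), but two steps are materially harder than you allow. First, the injectivity of $F^*$ is not deduced from $R^{>0}(M_2)=0$ and a fibration argument; it is proved by verifying on $M_{2,n}^{rt}$, one by one, every relation that holds in $R^*(X[n])$, using Faber's relation in $R^2(C_2^3)$, Getzler's genus-2 topological recursion on $\overline{M}_{2,2}$, the Belorousski--Pandharipande relation, and the relation obtained from $c_3(\mathbb{F}_7-\mathbb{E}_3)=0$. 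Second, the nondegeneracy of the pairing on $R^*(X^n)$ is not a routine normal-form count: the diagonal blocks of the intersection matrix are the matrices $T_r(x)$ studied by Hanlon and Wales in the context of Brauer's centralizer algebras, and the essential input is their eigenvalue analysis showing that the kernel of each block is spanned exactly by the degree-three relations \eqref{6}. You correctly flag the completeness of the relations as the main obstacle, but the induction on $n$ you propose does not by itself supply this combinatorial fact.
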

The study of the intersection ring of $\overline{M}_{0,n}$ by Keel [K] gives a proof of the conjectures above in the case $g=0$. 
In [T2] we give another proof of this fact by giving a different construction of the moduli space $\overline{M}_{0,n}$,
which leads to an explicit duality between the Chow groups in complementary degrees. Conjecture \ref{mg} in the case $n=0$ has been verified by Faber for $g \leq 23$. 
In [F4] Faber shows that Conjectures \ref{mg} and \ref{bar} are true for all $(g,n)$ if and only if Conjecture (D) is true for all $(g,n)$. 
In our work [T1] we proved that Conjecture \ref{mg} is true for $g=1$.
In this paper we prove this conjecture for the case $g=2$.

\vspace{+10pt}
\noindent{\bf Acknowledgments.}
I  am thankful to my advisor Carel Faber for useful discussions and comments. 
He informed me about the unpublished joint work with Rahul Pandharpipande on the tautological ring of curves. 

\section{The tautological ring  $R^*(X^{n})$}

\begin{defn}
Let $X$ be a smooth curve of genus two and $K$ be its canonical divisor. For a natural number $n \in \mathbb{N}$, the tautological ring,  $R^*(X^n) \subset A^*(X^n)$, is defined to be the 
$\mathbb{Q}$-subalgebra generated by the divisor classes $K_i,d_{j,k}$, for $1 \leq i \leq n$ and $1 \leq j < k \leq n$. Here, the class $K_i$ is the pull-back $\pi_i^*(K)$, where $\pi_i:X^n \rightarrow X$ denotes the projection onto the $i^{th}$ factor, and 
$d_{j,k}$ stands for the class of the big diagonal defined by the equation $x_j=x_k$. 

To describe the tautological algebra it is more convenient to introduce other classes in terms of the tautological classes. 
Let $a \in X$ be a Weierstrass point on the curve. From the rational equivalence $K=2a$ one gets the equality $K_i=2a_i$, where $a_i=\pi_i^*(a)$.
If we define $b_{j,k}:=d_{j,k}-a_j-a_k$, then another set of generators for $R^*(X^n)$ is $$\{a_i,b_{j,k}: \ 1 \leq i \leq n \ \mathrm{and} \ \ 1 \leq j < k \leq n\}.$$
\end{defn}

\begin{rem}\label{tc}
The generators of the tautological ring $R^*(X^n)$ are pull-backs of divisor classes defined on certain moduli spaces of curves: Let $C_g=M_{g,1}$ be the universal curve of genus $g$ and $C_g^n$ be the n-fold fiber product of $C_g$ over $M_g$, parameterizing smooth curves of genus $g$ with $n$-tuples of not necessarily distinct points.  
Denote by $\pi:C_g \rightarrow M_g$ the morphism forgetting the point and let $\omega$ be its relative dualizing sheaf.  The class of $\omega$ in $A^1(C_g)$ is denoted by $K$.
In [L] the tautological ring, $R^*(C_g^n) \subset A^*(C_g^n)$, is defined to be the subalgebra generated by the divisor classes $K_i=c_1(\omega_i)$,
where $\omega_i=\pi_i^*(\omega)$ and $\pi_i:C^n_g \rightarrow C_g$ is the projection onto the $i^{th}$ factor, $D_{j,k}$ (the class of the diagonal $x_j=x_k$) and the pull-backs from $M_g$ of the kappa classes $\kappa_i$. The curve $X$ above defines a canonical map from $X^n$ to $C_2^n.$ The pull-back of the classes $K_i$ and $D_{i,j}$ to $X^n$ via 
this map gives the classes defined above. Recall that $\kappa_i=0$ on $M_2$ for $i \geq 1$ since Mumford proves in [M] that theses kappa classes live on the boundary of the Deligne-Mumford compactification $\overline{M}_2$ of $M_2$.
\end{rem}
The main result of this section is the following theorem:
\begin{thm}\label{C}
(A) The space of relations in $R^*(X^n)$ is generated by the following ones: $$a_i^2=0, \qquad a_ib_{i,j}=0, \qquad  b_{i,j}^2=-4a_ia_j, \qquad b_{i,j}b_{i,k}=a_ib_{j,k}, \qquad \sum b_{i_1,i_2}b_{i_3,i_4}b_{i_5,i_6}=0,$$
where in each relation the indices are distinct. For every subset $\{i_1, \etc, i_6\}$ of $\{1, \etc, n\}$ with 6 elements,
there is a relation of the final type, where the sum is taken over all partitions of the set $\{i_1, \etc , i_6\}$ 
into 3 subsets with 2 elements.   

(B) For any $0 \leq d \leq n$, the pairing $R^d(X^n) \times R^{n-d}(X^n) \rightarrow \mathbb{Q}$ is perfect.
\end{thm}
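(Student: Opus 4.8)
The plan is to prove (A) and (B) simultaneously. After checking that the five listed relations actually hold in $R^*(X^n)$, I would introduce the abstract graded $\mathbb{Q}$-algebra $Q_n$ presented by generators $a_i,b_{j,k}$ modulo exactly these relations. Since the relations hold, there is a surjection $Q_n \twoheadrightarrow R^*(X^n)$. I would then exhibit an explicit additive basis of $Q_n$ and show that the induced pairing $Q_n^d \times Q_n^{n-d} \to Q_n^n \cong \mathbb{Q}$ is non-degenerate. Non-degeneracy of this pairing forces the kernel of the surjection to be zero, so $Q_n \cong R^*(X^n)$, which gives (A); and the identified pairing on $R^*(X^n)$ is then perfect, which gives (B).

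The first step is to verify the relations by intersection theory on low powers of $X$. Writing the diagonal class in $A^1(X\times X)$ through its K\"unneth components and using $a^2=0$ on the curve, the self-intersection formula $\Delta\cdot\Delta=\chi(X)[\mathrm{pt}]=-2\,a_1a_2$, and the restriction $a_1|_\Delta=a$, one reads off $a_i^2=0$, $a_ib_{i,j}=0$, $b_{i,j}^2=-4a_ia_j$ and $b_{i,j}b_{i,k}=a_ib_{j,k}$; these pairwise relations live on $X^2$ and $X^3$. The final relation is the genus-two phenomenon: $b_{j,k}$ is the component of the diagonal lying in $H^1(X)\otimes H^1(X)$, and $H^1(X)$ is a symplectic space of dimension $2g=4$, so the vanishing of $\sum b_{i_1,i_2}b_{i_3,i_4}b_{i_5,i_6}$ is the second fundamental theorem of invariant theory for $\mathrm{Sp}_4$, the antisymmetrization of the symplectic form over six vectors in a four-dimensional space; it can also be recognized as the pullback to $X^6$ of the Faber and Belorousski-Pandharipande relations referred to in the introduction.

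Next I would reduce monomials of $Q_n$ to a normal form. The relations $a_i^2=0$ and $a_ib_{i,j}=0$ let me assume all indices appearing are distinct and that the $a$-indices are disjoint from the $b$-indices, while $b_{i,j}^2=-4a_ia_j$ and $b_{i,j}b_{i,k}=a_ib_{j,k}$ let me assume no two $b$-factors share an index, so they form a partial matching. Thus each monomial reduces to $a_S\prod_{(j,k)\in M}b_{j,k}$ for a subset $S$ and a partial matching $M$ on its complement, of degree $|S|+|M|$ and support $S\sqcup V(M)$. In degree $n$ the constraint $|S|+2|M|\le n=|S|+|M|$ forces $|M|=0$ and $S=\{1,\dots,n\}$, so $Q_n^n$ is spanned by $a_1\cdots a_n$, giving the socle in degree $n$. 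The six-point relation then cuts down which matchings remain linearly independent, and controlling precisely this reduction, that is, computing the dimensions of the $b$-part governed by $\mathrm{Sp}_4$-invariant theory, is the combinatorial heart of the argument.

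Finally I would compute the pairing on these normal forms. Pairing $a_S\prod_M b$ with $a_{S'}\prod_{M'}b$ and reducing to top degree, the diagonal terms, with $M=M'$ and $S'=\{1,\dots,n\}\setminus(S\sqcup V(M))$, contribute $(-4)^{|M|}\,a_1\cdots a_n$, which is nonzero since $\int_{X^n}a_1\cdots a_n=1$; because the reduction uses only the relations, this combinatorial pairing coincides with the geometric one. With a suitable ordering of the basis (for instance by a partial order on matchings) the off-diagonal contributions are triangular, so the Gram matrix has nonzero diagonal $(-4)^{|M|}$ and is non-degenerate, proving (B) and, as explained, upgrading the surjection to an isomorphism to prove (A). The main obstacle is the bookkeeping forced by the six-point relation: one must select a basis of admissible matchings compatible with the $\mathrm{Sp}_4$-relations and verify that the pairing remains perfect after this reduction, which is exactly where the special geometry of genus two is used decisively.
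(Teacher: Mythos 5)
Your skeleton matches the paper's: verify the five relations, reduce to standard monomials $a_S\prod_M b_{j,k}$, observe that the pairing is block-diagonal according to the $b$-part, and analyze the blocks. But there are two genuine gaps. First, $R^*(X^n)$ is defined inside the Chow ring $A^*(X^n)$, not in cohomology, so your K\"unneth-component and $\mathrm{Sp}_4$-invariant-theory arguments only establish the relations modulo homological equivalence. The relations $b_{i,j}b_{i,k}=a_ib_{j,k}$ and $\sum b_{i_1,i_2}b_{i_3,i_4}b_{i_5,i_6}=0$ are nontrivial Chow-theoretic statements: the paper obtains the first by pulling back Faber's relation in $R^2(C_2^3)$ (equivalently, the Belorousski--Pandharipande relation restricted to $M_{2,3}^{rt}$), and the second by a genuinely new geometric construction --- the injectivity of the evaluation map $\mathbb{E}_3\to\mathbb{F}_7$ on $C_2^7$ gives $c_3(\mathbb{F}_7-\mathbb{E}_3)=0$, which after multiplication by $a_7$ and push-forward to $C_2^6$ yields the six-point relation. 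Note also that the Belorousski--Pandharipande relation lives in codimension two on $\overline{M}_{2,3}$ and only produces the three-point relation, so your fallback attribution of the six-point relation to it does not work.

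Second, and more seriously, your claim that the Gram matrix of matchings becomes triangular with nonzero diagonal $(-4)^{|M|}$ under a suitable ordering is false, and in fact inconsistent with the six-point relation: if that matrix were nondegenerate on the set of all matchings, the sum $\sum b_{i_1,i_2}b_{i_3,i_4}b_{i_5,i_6}$ could not vanish. Already for two $b$-factors the block has $16$ on the diagonal and $-4$ everywhere off it (nondegenerate but in no sense triangular), and for three or more $b$-factors the block is the Brauer-algebra Gram matrix $T_m(-4)$, which is genuinely singular. The whole content of part (B) is the identification of the kernel of $T_m(-4)$ with the span of the six-point relations; the paper gets this from Hanlon--Wales: the span of matchings decomposes into $S_{2m}$-eigenspaces $V_\lambda$ indexed by partitions of $2m$ into even parts, and the eigenvalue vanishes exactly when $\lambda$ has a part of size at least six. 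You correctly flag this step as ``the combinatorial heart'' and ``the main obstacle,'' but you do not supply the argument, so the proposal is incomplete precisely where the theorem is hard.
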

\begin{proof}
We first verify that the relations above hold in the tautological ring $R^*(X^n)$. The relations $a_i^2=a_ib_{i,j}=0$ and $b_{i,j}^2=-4a_ia_j$ obviously hold. 
The relation $b_{i,j}b_{i,k}=a_ib_{j,k}$ follows from a known relation in $R^2(C_2^3)$: In [F2] Faber shows that the following relation holds in $R^2(C^3_2)$:
\begin{equation} \tag{2}\label{F} K_1D_{1,2}+K_1D_{1,3}+K_2D_{2,3}-K_1D_{2,3}-K_2D_{1,3}-K_3D_{1,2}+2D_{1,2}D_{1,3}=0. \end{equation}
The pull-back of the relation above via the morphism $X^3 \rightarrow C_2^3$ gives $2(b_{1,2}b_{1,3}-a_1b_{2,3})=0$.

There is another way to see this: In [BP] a genus 2 relation among codimension 2 descendent stratum classes in $\overline{M}_{2,3}$ is found.  
The statement is given in Theorem 1 of [BP]. We don't rewrite it here since it is a rather long expression. Instead, we explain how it gives the desired relation in $R^2(X^3)$: 
If we restrict that relation to $M_{2,3}^{rt}$ and pull it back to $R^2(X[3])$ via the morphism $F:X[3] \rightarrow M_{2,3}^{rt}$, defined in the second section, we get a relation whose push-forward via the proper
morphism $X[3] \rightarrow X^3$ gives 12 times the relation $a_1b_{2,3}-b_{1,2}b_{1,3}=0$.

We use a method similar to the one in [F2] to get the last class of relations.
Let $\pi:C_2 \rightarrow M_2$ be the universal curve over $M_2$ and denote by $\omega$ its relative dualizing sheaf.  Consider $\mathbb{E}_3=\pi_*(\omega^{\otimes 3})$. It is a vector bundle of rank 5; its fiber at the point $C$ is the vector space 
$H^0(C,3K)$. Consider the space $C_2^d$, the $d$-fold fiber product of $C_2$ over $M_2$, and denote by $\pi:C^{d+1}_2 \rightarrow C_2^d$ the morphism forgetting the $(d+1)$-st point. Let $\Delta_{d+1}$ be the sum of the $d$ divisors $D_{1,d+1}, \etc , D_{d,d+1}$: $$\Delta_{d+1}=D_{1,d+1}+ \etc +D_{d,d+1}.$$
Let $\mathbb{F}_d=\pi_*(\mathcal{O}_{\Delta_{d+1}}\otimes \omega_{d+1} ^{\otimes 3})$.
The sheaf $\mathbb{F}_d$ is locally free of rank $d$; its fiber at a point $(C;x_1, \dots ,x_d)=(C;D)$ is the vector space 
$H^0(C,3K/3K(-D))$.
It is easy to see that the total Chern class of $\mathbb{F}_d$ can be expressed in terms of the tautological classes on $C_2^d$: $$c(\mathbb{F}_d)=(1+3K_1)(1+3K_2-\Delta_2) \etc (1+3K_d-\Delta_d).$$
There is a natural evaluation map $\phi_d:\mathbb{E}_3 \rightarrow \mathbb{F}_d$ between the pull-back of $\mathbb{E}_3$ and the bundle $\mathbb{F}_d$. Fiberwise the kernel over $(C;D)$ is the vector space $H^0(C,3K(-D))$. 
When $d=7$ the map $\phi_d$ is an injection, from which one gets the relation $c_3(\mathbb{F}_7-\mathbb{E}_3)=0$. 
Then we multiply this expression by $a_7$. 
The push-forward of the resulting expression via the morphism $\pi:C_2^7 \rightarrow C_2^6$,
which forgets the last coordinate, 
yields a relation whose pull-back to $R^3(X^6)$ via the morphism $X^6 \rightarrow C_2^6$ is equal to minus the desired relation: 
\begin{equation} \tag{3}\label{6} \sum b_{i_1,i_2}b_{i_3,i_4}b_{i_5,i_6}=0. \end{equation}

Using the proven relations in the tautological algebra we are able to study the pairing $R^d(X^n) \times R^{n-d}(X^n) \rightarrow \mathbb{Q}$, for $0 \leq d \leq n$. 
From the relations above, we see that the tautological group $R^d(X^n)$ is generated by monomials of the form 
$v=a(v)b(v)$, where $a(v)$ is a product $\prod a_i$ of $a_i$'s for $i \in A_v$, and $b(v)$ is a product $\prod b_{j,k}$ of $b_{j,k}$'s, for $j,k \in B_v$, 
such that $A_v$ and $B_v$ are disjoint subsets of the set $\{1, \etc ,n\}$ satisfying $d=|A_v|+\frac{1}{2}|B_v|$. Under this circumstance, 
the monomial $v$ is said to be \emph{standard}. To any standard monomial $v$ we associate a dual element $v^* \in R^{n-d}(X^n)$, 
which is defined to be the product of all $a_i$'s, for $i \in \{1, \etc ,n\}-A_v \cup B_v$, with $b(v)$. The following lemma enables us to study the pairing:
\begin{lemma}
Let $v\in R^d(X^n)$ and $w \in R^{n-d}(X^n)$ be standard monomials. If the product $v \cdot w$ is nonzero, then $B_v=B_w$, 
and the disjoint union of the sets $A_v,A_w$ and $B_v=B_w$ is equal to the set $\{1, \etc ,n\}$.
\end{lemma}
\begin{proof}
The proof is easy and can be found in [T1].
\end{proof}
So, after a suitable enumeration of generators for $R^d(X^n)$, the resulting intersection matrix of the pairing between standard 
monomials and their duals consists of square blocks along the main diagonal and the off-diagonal blocks are all zero. To prove that the pairing is perfect we need to study the square blocks on the main diagonal. 
These matrices and their eigenvalues are studied in [HW]. In particular, from their result it follows that the kernel of any such matrix is generated by relations of the form \eqref{6}:

\begin{lemma}
Let $m \geq 2$ be a natural number and $S$ be the set of all standard monomials $v$ of the form $b_{i_1,j_1} \dots b_{i_m,j_m}$ in $R^m(X^{2m})$. The kernel of the intersection matrix $(v \cdot w)$ for $v,w$ in $S$
is generated by expressions of the form \eqref{6}.
\end{lemma}
\begin{proof}
The proof is identical to that of Lemma 4.4. in [T1]. We briefly recall the proof:
The intersection matrix $(v \cdot w)$ for $v,w \in S$ in [HW] is denoted by $T_r(x)$ for $r=m$ and $x=-4$. The $S_{2m}$-module generated by elements of $S$ decomposes into the sum $\oplus_{\lambda} V_{\lambda}$, 
where $\lambda$ varies over all partitions of $2m$ into even parts. For each such $\lambda$ the space $V_{\lambda}$ is an eigenspace of $T_r(x)$. 
The corresponding eigenvalue is zero  if an only if the partition $\lambda$ has a part of size at least six. 
For each such $\lambda$ it is easy to identify the space $V_{\lambda}$ with a subspace of $R^m(X^{2m})$ generated by expressions 
of the form \eqref{6}. This proves the statement.
\end{proof}

Since the relations of the form \eqref{6}  hold in the tautological ring $R^*(X^n)$, we conclude that the pairing is perfect. 
This also shows that the relations stated in the theorem generate all relations in the tautological ring.
\end{proof}

\section{The description of the fiber $X[n]$}
In the previous section we considered a fixed smooth curve $X$ of genus two and we studied the tautological ring $R^*(X^n)$ for a natural number $n \in \mathbb{N}$. In this section we look at the reduced fiber of 
the projection $\pi: \overline{M}_{2,n} \rightarrow \overline{M}_2$, which forgets the markings on the curve and stabilizes, over the moduli point $[X] \in M_2$. This fiber is the Fulton-MacPherson compactification $X[n]$ of the configuration space $F(X,n)$. 
Let us recall from [FM] the related definitions and some basic properties of this space: The configuration space $$F(X,n)=X^n \backslash \bigcup \Delta_{\{a,b\}}$$
is the complement in the cartesian product of the large diagonals $\Delta_{\{a,b\}}$ where the points with two labels $a$ and $b$ coincide. The compactification $X[n]$ is defined as follows: For each subset $S$ of $\{1, \etc ,n\}$
with at least two points let $\mathrm{Bl}_{\Delta}(X^S)$ denote the blow-up of the corresponding cartesian product $X^S$ along its small diagonal. There is a natural embedding $$F(X,n) \subset X^n \times \prod_{|S| \geq 2} \mathrm{Bl}_{\Delta}(X^S),$$
and $X[n]$ is defined to be the closure of the configuration space in this product. In Section 3 of [FM] an explicit construction of $X[n]$ by a sequence of blow-ups is given. Their construction is inductive: They define $X[1]$ to be $X$. 
Assuming that the space $X[n]$ is already constructed, there is a sequence of blow-ups of the product $X[n] \times X$, corresponding to a class of diagonals, which gives $X[n+1]$. 

It is shown in [FM] that
\begin{itemize}
\item $X[n]$ is an irreducible variety;
\item the canonical map from $X[n]$ to $X^n$ is proper;
\item $X[n]$ is symmetric,
\end{itemize} 
i.e., the symmetric group $\mathrm{S}_n$ acts on $X[n]$. Another important property of the variety $X[n]$ is that it is nonsingular. For two points this space coincides with the product $X \times X$. The space $X[3]$ is obtained from the product $X^3$ by blowing up the small diagonal. As it is mentioned in [FM], "for $n > 3$, however, if one starts by blowing up the small diagonal in $X^n$ and then blows up proper transforms of the next larger diagonals, then the proper transform of succeeding diagonals will not be separated, so extra blow-ups are needed to get a smooth compactification. It would be interesting to see if other sequences of blow-ups give compactifications that are symmetric, and whose points have explicit and concise descriptions." 

In [Li] the notion of the wonderful compactification of an arrangement of subvarieties is defined. It is proven that for a nonsingular algebraic variety $X$, the space $X[n]$ is a wonderful compactification corresponding to a certain arrangement of subvarieites. 
This leads to a construction of $X[n]$ as a symmetric sequence of blow-ups in the order of ascending dimension.   

We will describe it as follows: Let $I \subset \{1, \etc ,n\}$ be a subset with $|I| \geq 3$. The subvariety $X_I$ of $X^n$ is defined to be the set of all points $(x_1, \etc ,x_n)$ in $X^n$ where the coordinates corresponding to the index set $I$ are equal to each other. The space $X[n]$ is obtained from $X^n$ by the following sequence of blow-ups: First blow up the small diagonal corresponding to the set $I=\{1, \etc , n\}$. At each stage we increase the dimension of the blow-up center by one. At the $k^{th}$ step, for $1 \leq k \leq n-2$, the blow-up is along the union of the proper transforms of the subvarieties $X_I$ for which $|I|=n+1-k$. This can be done by blowing-up these subvarieties in any order.
The exceptional divisor of the blow-up along the subvariety $X_I$ is denoted by $D_I$ as well as the class of its proper transform under later blow-ups.  

The connection between the space $X[n]$ and the moduli space $M_{2,n}^{rt}$ is explained as follows: Let $\pi:X^{n+1}=X^n \times X \rightarrow X^n$ be the projection onto the first factor. This is the trivial family of curves of genus two parameterized by the points of $X^n$ with $n$ disjoint sections $\sigma_1, \etc ,\sigma_n$ over the open subset $F(X,n)$. This data induces the morphism $F:F(X,n) \rightarrow M_{2,n}$, which is explicitly given by the rule $$(x_1, \etc ,x_n) \rightarrow [(X;x_1, \etc ,x_n)].$$
The Fulton-MacPherson compactification $X[n]$ of the configuration space $F(X,n)$ gives rise to an extension of the morphism $F$ to the moduli space $M_{2,n}^{rt}$, which will be denoted by the same letter, by abuse of notation. More precisely, the projection $\pi: X[n+1] \rightarrow X[n]$ is a family of stable curves of genus two with rational tails. The morphism $\pi$ admits $n$ disjoint sections in the smooth locus of its fibers. These sections, which restrict to previously defined ones, will be denoted by the same letters, another abuse of notation. The morphism $F:X[n] \rightarrow M_{2,n}^{rt}$ is defined by the rule $$P \rightarrow [(\pi^{-1}(P); \sigma_1(P), \etc ,\sigma_n(P))].$$ 
This morphism will be our main tool in the comparison between the tautological ring of the moduli space $M_{2,n}^{rt}$ and the algebra $R^*(X^n)$ defined in the previous section. 

\section{The tautological ring  $R^*(X[n])$}
In the first part we defined the tautological ring of the product $X^n$ for a fixed curve $X$ of genus two and we proved that it is a Gorenstein algebra. 
In this section we define the tautological algebra $R^*(X[n])$ of the fiber $X[n]$ and we prove that it is Gorenstein as well. We first recall some general facts from [FM] about the intersection ring of the blow-up of a
smooth variety $Y$ along a smooth irreducible subvariety $Z$. When the restriction map from $A^*(Y)$ to $A^*(Z)$ is surjective, S. Keel has shown in [K] that the computations become simpler. 
We denote the kernel of the restriction map by $J_{Z/Y}$ so that $$A^*(Z)=\frac{A^*(Y)}{J_{Z/Y}}.$$ Define a Chern polynomial for $Z \subset Y$, denoted by 
$P_{Z/Y}(t)$, to be a polynomial $$P_{Z/Y}(t)=t^d+a_1t^{d-1}+ \dots +a_{d-1}t+a_d \in A^*(Y)[t],$$ where $d$ is the codimension of $Z$ in $Y$ 
and $a_i \in A^i(Y)$ is a class whose restriction in $A^i(Z)$ is $c_i(N_{Z/Y})$, where $N_{Z/Y}$ is the normal bundle of $Z$ in $Y$. We also require that $a_d=[Z]$, 
while the other classes $a_i$, for $0<i<d$, are determined only modulo $J_{Z/Y}$.
We identify $A^*(Y)$ as a subring of $A^*(\widetilde{Y})$ by means of the map $\pi^*:A^*(Y) \rightarrow A^*(\widetilde{Y})$, 
where $\pi:\widetilde{Y}\rightarrow Y$ is the birational morphism. Let $E \subset \widetilde{Y}$ be the exceptional divisor. The formula of Keel is as follows:
The Chow ring $A^*(\widetilde{Y})$ is given by $$A^*(\widetilde{Y})=\frac{A^*(Y)[E]}{(J_{Z/Y} \cdot E,P_{Z/Y}(-E))}.$$

\begin{rem}
As we saw in the second section, the fiber $X[n]$ is described as a sequence of blow-ups of the variety $X^n$. 
It is easy to see that the restriction map from $A^*(Y)$ to the Chow ring of a blow-up center $Z$ in the construction of the space $X[n]$ is surjective. This means that we can apply the formula of Keel. 
Therefore, there are two essential data needed in the computation of the intersection ring at each step: the ideal $J_{Z/Y}$ and a Chern polynomial $P_{Z/Y}$. These are easy to compute when $Y=X^n$ 
and $Z=X_I$, for a subset $I \subset \{1, \dots ,n\}$. The relation between the ideals $J_{V/Y}$ and $J_{\widetilde{V}/\widetilde{Y}}$, where $\widetilde{Y}=\mathrm{Bl}_Z Y$ and $V \subset Y$ is a subvariety, is discussed in [FM]
as well as the formula relating a Chern polynomial $P_{V/Y}$ to $P_{\widetilde{V}/\widetilde{Y}}$.
\end{rem}

\begin{defn}\label{T}
The tautological ring, $R^*(X[n])$ of $X[n]$, is defined to be the subalgebra of the Chow ring $A^*(X[n])$ generated by the pull-backs of the tautological classes in $R^*(X^n)$ and the classes of the exceptional divisors $D_I$. 
\end{defn}

\begin{rem}
As we observed in Remark \ref{tc}, the generators of the tautological algebra $R^*(X^n)$ are pull-backs of natural classes on certain moduli spaces of curves. There is a similar situation for the space $X[n]$: 
The tautological classes on the fiber $X[n]$ are simply the restrictions of the generators of $R^*(M_{2,n}^{rt})$ to $X[n]$ via the map $F:X[n] \rightarrow M_{2,n}^{rt}$.
\end{rem}

From the analysis of the intersection ring of $X[n]$ we obtain the following description of the relations:
\subsection{Relations in $R^*(X[n])$}
\begin{itemize}\label{R}
\item
The first class of relations in $R^*(X[n])$ are those that hold among the generators of $R^*(X^n)$ described in the second section. Notice that the tautological algebra of $X^n$ is identified as a subalgebra of $R^*(X[n])$.

\item For subsets $I,J \subset \{1, \etc ,n\}$, where $|I|,|J| \geq 3$, the product $D_I \cdot D_J \in R^2(X[n])$ is zero unless $$(*) \qquad I \subseteq J, \qquad  \mathrm{or} \ J \subseteq I, \qquad \mathrm{or} \ I \cap J=\emptyset.$$

\item For a subset $I \subset \{1, \etc ,n\}$ with $|I| \geq 3$, consider the inclusion morphism $i_I:X_I \rightarrow X^n$. The relation $x \cdot D_I=0$ holds for $x \in \ker(i^*_I: R^*(X^n) \rightarrow R^*(X_I))$. This gives a set of relations which 
coincides with the kernel of the operator $X_I \cap -:R^*(X^n) \rightarrow R^*(X^n)$ defined by the rule $x \rightarrow X_I \cap x$.

\item Let $Z$ be a blow-up center and assume that it is written as the transversal intersection $V \cap W$ of the subvarieties $V$ and $W$ of the variety $Y$.
If $V$ is the transversal intersection $V_1 \cap \dots \cap V_k$ and the proper transforms $\widetilde{V}_1, \dots ,\widetilde{V}_k$ of the subvarieties $V_1, \dots ,V_k$ are centers of later blow-ups, 
then from Lemma 5.5 in [FM], we get the relation $P_{W/Y}(-E_Z) \cdot E_{\widetilde{V}_1} \dots  E_{\widetilde{V}_k}=0$. 

\item If $Z$ is a blow-up center of the variety $Y$ with a Chern polynomial $P_{Z/Y}$, there is a relation $P_{Z/Y}(-E_Z)=0$. This gives the last class of relations in the tautological ring of $X[n]$.
\end{itemize}

\subsection{Standard monomials}
The tautological ring of the space $X[n]$ is generated over $R^*(X^n)$ by the classes of the exceptional divisors $D_I$. The generators for the ideal of relations are described in \ref{R}.
These relations are used to obtain a smaller set of generators for the ring $R^*(X[n])$. This leads us to the notion of 
\emph{standard monomials}. 
To enumerate the monomials we associate a graph to the generator $v$. We first define an ordering on the polynomial ring 
$$R:=\mathbb{Q}[a_i,b_{j,k},D_I: 1 \leq i \leq n,1 \leq j<k \leq n,I \subset \{1,\etc,n\}, \ \mathrm{where} \ |I|\geq 3].$$
\begin{defn}\label{<}
Let $I,J \subset \{1, \etc ,n\}$, we say that $I<J$ if
\begin{itemize}
\item $|I|<|J|$

\item or if $|I|=|J|$ and the smallest element in $I - I \cap J$ is smaller than the smallest element of $J-I\cap J$.
\end{itemize}
Put an arbitrary total order on monomials in $$\mathbb{Q}[a_i,b_{j,k}:1 \leq i \leq n, 1 \leq j < k \leq n].$$
Suppose $v_1,v_2 \in R$ are monomials. We say that $v_1 < v_2$ if we can write them as 
$$v_1=a(v_1)b(v_1) \cdot \prod_{r=1}^{r_0}D_{I_r}^{i_r} \cdot D, \qquad \  v_2=a(v_2)b(v_2) \cdot \prod_{r=1}^{r_0}D_{I_r}^{j_r} \cdot D, \qquad \mathrm{where} \ D=\prod_{r=r_0+1}^m D_{I_r}^{i_r},$$ 
for $I_m < \etc <I_1$, and $i_{r_0}<j_{r_0}$; 
or if $r_0=0$ and $a(v_1)b(v_1)<a(v_2)b(v_2)$. 
Furthermore, we say that  $v_1 \ll v_2$, if for any factor $D_I$ of $v_2$ we have that $v_1 <D_I$. Note that $v_1 \ll v_2$ implies that $v_1 <v_2$. 
\end{defn}

\begin{defn}\label{Graph}
Let $v=a(v)\cdot D_{I_1}^{i_1} \etc D_{I_m}^{i_m}$, where  $i_r \neq 0$ for  $r=1, \etc ,m$ and $I_m< \etc < I_1$, be a monomial. 
The directed graph $\mathcal{G}=(V_\mathcal{G},E_\mathcal{G})$ associated to $v$ or of the collection $I_1, \etc ,I_m$ is defined by the following data:
\begin{itemize}
\item A set $V_\mathcal{G}$ and a one-to-one correspondence between members of $V_{\mathcal{G}}$ and members of the set $\{1, \etc ,m\}$. The elements of $V_\mathcal{G}$ are called the vertices of $\mathcal{G}.$
\item  A set $E_\mathcal{G} \subset V_\mathcal{G} \times V_\mathcal{G}$ consisting of all pairs $(r,s)$, where $I_s$ is a maximal element of the set $$\{I_i: I_i \subset I_r\}$$ 
with respect to inclusion. The elements of $E_\mathcal{G}$ are called the edges of $\mathcal{G}.$
\end{itemize}
For a vertex $i \in V_\mathcal{G}$, the closure $\overline{i} \subset V_\mathcal{G}$ is defined to be the subset
$$\{r \in V_\mathcal{G}: I_r \subseteq I_i\}$$ of $V_\mathcal{G}.$ The degree $\deg(i)$ of $i$ is defined to be the number of the elements of the set 
$$\{j \in V_\mathcal{G}: (i,j) \in E_\mathcal{G}\}.$$

A vertex $i \in V_\mathcal{G}$ is called a \emph{root} of $\mathcal{G}$ if $I_i$ is maximal with respect to inclusion of sets. 
It is called \emph{external} if $I_i$ is a minimal subset and all the other vertices will be called \emph{internal}.

In the following, we use the letters $I_1,\etc,I_m$ to denote the vertices of $\mathcal{G}$.
\end{defn}

\begin{rem}
It is easy to see that the graph $\mathcal{G}$ associated to a non-zero monomial $v$ has no loop. Therefore, we refer to $\mathcal{G}$ as the associated forest of $v$ or of the collection $I_1,\etc,I_m$.
\end{rem}

\begin{ex}\label{ex}
Let $n=20$ and $v=\prod_{i=r}^7 D_{I_r}$, where the subsets $I_1, \etc , I_7$ of the set $\{1, \etc , 20\}$ are defined as follows: 
$$I_1=\{1, \etc , 8\}, \qquad I_2=\{1,2,3\}, \qquad I_3=\{4,5,6\},$$
$$I_4=\{9,\etc,20\}, \qquad I_5=\{9, \etc , 18\}, \qquad I_6=\{9,10,11,12\}, \qquad  I_7=\{13,14,15,16\}.$$
The graph $\mathcal{G}$ associated to the monomial $v$ is pictured below. 
The graph $\mathcal{G}$ has 7 vertices and 5 edges. The vertices $I_1,I_5$ have degree two, the vertex $I_4$ is of degree one and all the other vertices are of degree zero. The vertices $I_1,I_4$ are roots of the graph. External vertices are $I_2,I_3,I_6,I_7$
and the vertices $I_1,I_4,I_5$ are internal.  

\begin{figure}[htp]
\begin{tikzpicture}
[scale=.8,auto=left,every node/.style={circle,fill=blue!20}]
\node (n1) at (2,8) {$I_1$};
\node (n2) at (1,10)  {$I_2$};
\node (n3) at (3,10) {$I_3$};
\draw[>=latex,->] (n1) to (n2);
\draw[>=latex,->] (n1) to (n3);
\node (n4) at (6,8) {$I_4$};
\node (n5) at (6,10)  {$I_5$};
\node (n6) at (5,12) {$I_6$};
\node (n7) at (7,12) {$I_7$};
\draw[>=latex,->] (n4) to (n5);
\draw[>=latex,->] (n5) to (n6);
\draw[>=latex,->] (n5) to (n7);
\end{tikzpicture}
\caption{The graph $\mathcal{G}$}
\end{figure}
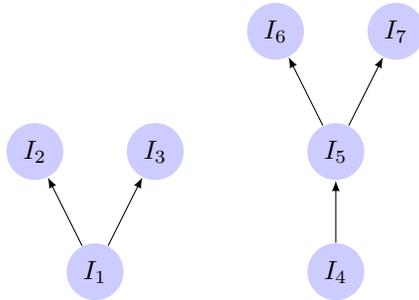
\end{ex}

\begin{defn}\label{standard}
Let $v$ be as in Definition \ref{Graph}, $\mathcal{G}$ be the associated forest, and $J_1,\etc,J_s$, for some $s \leq m$, be the roots of $\mathcal{G}.$ For each $1 \leq r \leq s$ let $\alpha_r \in J_r$ 
be the smallest element. The subset $S$ of the set $\{1,\etc,n\}$ is defined as follows:
$$S:=\{\alpha_1,\etc,\alpha_s\} \cup (\cap_{r=1}^m I_r^c),$$

The monomial $v$ is said to be {\bf standard} if
\begin{itemize}
\item The monomial $a(v)b(v)\in R^*(X^S)$ is in standard form according to the definition given in the first section.
\item For each $r$ we have that $$i_r  \leq \min(|I_r|-2, |I_r|-|\cup_{I_s \subset I_r} I_s|+\deg(I_r)-2).$$
\end{itemize}
\end{defn}

\begin{ex} Let $I \subset J$ be subsets of the set $\{1, \etc , n\}$ with at least 3 elements. The monomial $D_I^i$ is standard if and only if $i <|I|-1$. The monomial $D_J^ j D_I^i$ is standard if and only if $j < |J|-|I|$ and $i < |I|-1$. 
\end{ex}

\begin{rem} \label{st}
From the relations described in \ref{R}, it is easy to see that the tautological group $R^d(X[n])$, for $0 \leq d \leq n$, is additively generated by standard monomials. 
\end{rem}

\subsection{Definition of the dual element}
One advantage of introducing the standard monomials is that they give a smaller set of generators for the tautological groups. This is useful from a computational point of view.
Another important conclusion is that there is a natural involution on the tautological ring which gives a one to one correspondence between standard monomials in complementary degrees.

\begin{defn}\label{dual}
Suppose $v=a(v)b(v) \cdot D(v)$ is a standard monomial, where $a(v)b(v)$ is in the tautological ring $R^*(X^n)$ of $X^n$, and $$D(v)=\prod_{r=1}^m D_{I_r}^{i_r},$$ 
where $i_r \neq 0$ for $r=1,...,m$, and $I_m < ...<I_1$. Let $\mathcal{G}$ be the associated forest, and $J_1, \etc,J_s$, the set $S$ and elements $\alpha_r \in J_r$ for $1 \leq r \leq s$ be as in Definition \ref{standard}. 
The subset $T$ of the set $\{1,\etc,n\}$ is defined to be $$T:=S-A_v \cup B_v.$$
For each $1 \leq r \leq m$, define $j_r$ to be 
$$ \left\{ \begin{array}{ll}
|I_r|-|\cup_{I_s \subset I_r}I_s|+\deg(I_r)-1-i_r & \qquad I_r \ \mathrm{is \ an \ internal \ vertex \ of} \ \mathcal{G} \\ \\
|I_r|-1-i_r & \qquad I_r \ \mathrm{is \ an \ external \ vertex \ of} \ \mathcal{G}.  \\
\end{array} \right. $$

We define $v^*=a(v^*)b(v^*) \cdot D(v^*)$, where  $$a(v^*)=\prod_{i \in T}a_i,  \qquad b(v^*)=b(v),\qquad D(v^*)=\prod_{r=1}^m D_{I_r}^{j_r}.$$
\end{defn}

\begin{rem} \label{*}
It follows from Definition \ref{standard} that the dual of a standard monomial $v$ is a well-defined standard monomial. Furthermore, one has the property $v^{**}=v$, which shows that * is an involution. 
This gives an explicit duality between the standard monomials in complementary degrees. 
\end{rem}

\begin{ex} Let $v$ be the monomial in Example \ref{ex}. It is easy to verify that $v$ is a standard monomial. The dual of $v$ is equal to $a_1a_9 \prod_{i=1}^7 D_{I_r}^{i_r}$, where $i_1=i_2=i_4=1$ and $i_3=i_5=i_6=i_7=2$.
\end{ex}

\subsection{The pairing $R^d(X[n]) \times R^{n-d}(X[n])$}
In Definition \ref{dual} we defined dual elements for standard monomials. Below, we will see that the resulting intersection matrix between the standard 
monomials and their duals consists of square blocks on the main diagonal, whose entries are, up to a sign, intersection numbers in $R^{|S|}(X^{S})$, for certain sets $S$, and all blocks under the diagonal are zero. 
To prove the stated properties of the intersection matrix, we introduce a natural filtration on the tautological ring.

\begin{defn}\label{filter}
Let $v$ be a standard monomial as given in Definition \ref{standard}, and let $J_1, \dots ,J_s$ be the roots of the associated forest. Define $p(v)$ to be the degree of the element $$a(v)b(v) \cap_{r=1}^s X_{J_r} \in A^*(X^n),$$ 
which is the same as the integer $$\deg{a(v)b(v)}+\sum_{r=1}^s|J_r|-s.$$ The subspace $F^p R^*(X[n])$ of the tautological ring is defined to be the $\mathbb{Q}$-vector space generated by standard monomials $v$ satisfying $p(v) \geq p$.
\end{defn}

\begin{prop}\label{fil}
(a) For any integer $p$, we have that $F^{p+1}R^*(X[n]) \subseteq F^p R^*(X[n])$.

(b) Let $v \in F^pR^*(X[n])$ and $w \in R^d(X[n])$ be such that $w \ll v$. If $p+d > n$, then $v \cdot w$ is zero. In particular, $F^{n+1} R^*(X[n])$ is zero.
\end{prop}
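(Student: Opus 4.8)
The plan is to dispatch (a) directly from the definitions and to prove (b) by combining a support argument with an induction on the blow-up sequence producing $X[n]$. Throughout, let $\rho\colon X[n]\to X^{n}$ be the canonical proper birational morphism, and I will use the relations collected in \ref{R}. Part (a) is immediate: by Definition \ref{filter}, $F^{p}R^{*}(X[n])$ is spanned by the standard monomials $v$ with $p(v)\ge p$, and every generator of $F^{p+1}$ (those with $p(v)\ge p+1$) is in particular a generator of $F^{p}$, whence $F^{p+1}R^{*}(X[n])\subseteq F^{p}R^{*}(X[n])$. For (b), by Remark \ref{st} and bilinearity it suffices to treat a single standard monomial $v$ with $p(v)\ge p$; since $p(v)\ge p$ we then have $p(v)+d>n$. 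Note that the final assertion $F^{n+1}R^{*}(X[n])=0$ is the special case $w=1$, $d=0$: there $w=1\ll v$ trivially, and $p(v)\ge n+1>n$ forces $v=v\cdot 1=0$.

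The geometric input is as follows. Let $J_{1},\dots ,J_{s}$ be the roots of the forest of $v$ and set $W:=\bigcap_{r=1}^{s}X_{J_{r}}\subset X^{n}$, a diagonal of dimension $\dim W=n-\sum_{r=1}^{s}(|J_{r}|-1)$. Each root divisor $D_{J_{r}}$ divides $D(v)$ and is supported on $\rho^{-1}(X_{J_{r}})$, so the refined product $D(v)=\prod_{r}D_{I_{r}}^{i_{r}}$, and hence $v=a(v)b(v)\,D(v)$, is supported on $Z:=\rho^{-1}(W)$. Writing $\iota\colon Z\hookrightarrow X[n]$ for the inclusion and $\bar\rho\colon Z\to W$ for the restriction of $\rho$, we get $v=\iota_{*}\eta_{v}$ for some $\eta_{v}\in A_{*}(Z)$, and the projection formula yields $v\cdot w=\iota_{*}\!\left(\eta_{v}\cdot\iota^{*}w\right)$. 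Since $a(v)b(v)$ is pulled back from $X^{n}$, its restriction to $Z$ factors as $\bar\rho^{*}\big(a(v)b(v)|_{W}\big)$ with $a(v)b(v)|_{W}\in A^{\deg a(v)b(v)}(W)$.

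The crux — and the step where the hypothesis $w\ll v$ is indispensable — is to show that after restriction to $Z$ the contribution of $w$ descends to the base $W$, so that the pairing is governed by $A^{*}(W)$ rather than by the exceptional directions of $Z$. Unwinding Definition \ref{<}, the relation $w\ll v$ forces every divisor factor $D_{I'}$ of $w$ to satisfy $I'<I_{r}$ for all divisor indices $I_{r}$ of $v$, i.e. the divisors of $w$ are strictly smaller than the smallest blow-up center occurring in $v$. I expect to prove, by induction on the blow-up sequence (restricting to the exceptional divisor of a maximal center and tracking how $p(\,\cdot\,)$ and the orders $<,\ll$ transform, exactly as in the analogous lemma of [T1]), that this smallness makes $\eta_{v}\cdot\iota^{*}w$ of the form $\bar\rho^{*}\big(a(v)b(v)|_{W}\cdot\bar w\big)\cdot\phi$ with $\bar w\in A^{d}(W)$ and $\phi$ an exceptional class. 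Then $a(v)b(v)|_{W}\cdot\bar w\in A^{\deg a(v)b(v)+d}(W)$ vanishes as soon as $\deg a(v)b(v)+d>\dim W$, which is precisely the inequality $p(v)+d>n$; hence $\eta_{v}\cdot\iota^{*}w=0$ and $v\cdot w=0$.

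The descent statement is the main obstacle. The naive support argument alone gives only the weaker bound $\deg v+d>n$, because $p(v)$ may strictly exceed $\deg v$ when $v$ carries large root centers but small exponents and small $a(v)b(v)$. The subtlety is that $w$ may still contain divisors $D_{I'}$ with $I'\subsetneq J_{r}$ for some root $J_{r}$, which a priori contribute to the exceptional (fiber) directions over $W$ and obstruct the descent to the base. Controlling these contributions is where one must use both the standardness of $v$ — the bounds $i_{r}\le\min(|I_{r}|-2,\,|I_{r}|-|\cup_{I_{s}\subset I_{r}}I_{s}|+\deg(I_{r})-2)$, which keep every exceptional power strictly below the top power dictated by the Chern polynomial relation $P_{Z/Y}(-E)=0$ — and the full force of $w\ll v$. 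The inductive reduction to one center at a time, together with the relations $J_{Z/Y}\cdot E$ and $P_{Z/Y}(-E)=0$ from \ref{R}, is what makes this bookkeeping go through.
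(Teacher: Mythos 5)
Part (a) is correct as you state it, and your geometric reformulation of part (b) is the right starting point (the paper itself gives no argument here beyond citing Proposition 5.15 of [T1]): since the roots $J_1,\dots,J_s$ are pairwise disjoint, $W=\bigcap_r X_{J_r}$ has codimension $\sum_r(|J_r|-1)$ in $X^n$, so the hypothesis $p(v)+d>n$ is exactly $\deg a(v)b(v)+d>\dim W$, and $v$ is supported over $W$ because $D(v)$ is divisible by every $D_{J_r}$.

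However, part (b) as written has a genuine gap, and it sits precisely at the step you yourself label the crux. The claim that $\eta_v\cdot\iota^*w$ has the form $\bar\rho^*\bigl(a(v)b(v)|_W\cdot\bar w\bigr)\cdot\phi$ with $\bar w\in A^{d}(W)$ is never proved, and as formulated it cannot be literally true: a factor $D_{I'}$ of $w$ with $I'\subsetneq J_r$ --- which $w\ll v$ does not exclude, since $I'<J_r$ in the order of Definition \ref{<} --- restricts on $\rho^{-1}(W)$ to a divisor class whose support dominates $W$, i.e.\ a class in the fibre directions of $\bar\rho$ that is not $\bar\rho^*$ of any degree-one class on $W$; the same applies to the non-root factors of $D(v)$ itself. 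Requiring $\bar w$ to carry the full degree $d$ is therefore exactly the assertion that none of the degree of $w$ can hide in the exceptional directions, and establishing this --- by trading fibre-direction classes for base classes of at least the same codimension using $P_{Z/Y}(-E)=0$ and $J_{Z/Y}\cdot E=0$, the standardness bounds on the exponents $i_r$, and the hypothesis $w\ll v$ --- is the entire content of the proposition. You write that you ``expect to prove'' this by induction on the blow-up sequence as in [T1], but the inductive statement is never formulated and no step of it is verified, so the argument that does all the work is missing. To complete the proof you would need to state and prove that intermediate lemma (one blow-up at a time: how restriction to an exceptional divisor and substitution via the Chern polynomial relation affect $p(\cdot)$ and the orders $<$ and $\ll$), which is what the cited Proposition 5.15 of [T1] actually supplies.
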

\begin{proof}
The proof is similar to the proof of Proposition 5.15 in [T1].
\end{proof}

Using property (b) in the previous proposition we can show the triangular property of the intersection matrix:

\begin{prop}\label{tri}
Suppose $v_1,v_2 \in R^d(X[n])$ are standard monomials satisfying $D(v_1)<D(v_2)$. Then $v_1 \cdot v_2^*=0$.
\end{prop}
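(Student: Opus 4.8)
The plan is to deduce the vanishing from the filtration estimate of Proposition \ref{fil}(b). The product $v_1 \cdot v_2^*$ lies in $R^n(X[n])$, so it suffices to realise it as a product to which part (b) applies with $p+d>n$. The starting point is a bookkeeping identity for the dual: since $v_2^*$ has the same associated forest as $v_2$ and $b(v_2^*)=b(v_2)$, a telescoping computation over the forest (using the dual exponents $j_r$ of Definition \ref{dual} and the definition of $p$ in Definition \ref{filter}) gives $p(v_2^*)=\deg(v_2^*)+\deg D(v_2)=(n-d)+\deg D(v_2)$. The hypothesis $D(v_1)<D(v_2)$ forces $D(v_2)$ to be a nonempty product of exceptional classes — otherwise $v_1$ would be at least as large as $v_2$ in the ordering of Definition \ref{<} — so $\deg D(v_2)\ge 1$ and hence $p(v_2^*)+d=n+\deg D(v_2)>n$. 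Thus the degree inequality needed for Proposition \ref{fil}(b) holds automatically, and the whole content of the proposition is the ordering condition $v_1\ll v_2^*$.

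The clean case is when the $D$-part of $v_1$ lies strictly below every exceptional factor of $v_2^*$, i.e. $v_1<D_{I_r}$ for each $I_r$ dividing $D(v_2^*)=D(v_2)$; this is exactly $v_1\ll v_2^*$, and Proposition \ref{fil}(b) applied to $v=v_2^*$, $w=v_1$ gives $v_1\cdot v_2^*=0$ at once. First I would isolate the combinatorial statement that, whenever $v_1$ shares no exceptional factor with $v_2$ and $D(v_1)<D(v_2)$, one indeed has $v_1\ll v_2^*$: this follows by locating the largest set $I_{r_0}$ at which the two $D$-parts first differ and checking, against the definition of $\ll$ and the forest structure of the dual, that every factor of $v_2^*$ dominates $v_1$.

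The remaining, and genuinely harder, case is when $v_1$ and $v_2^*$ share an exceptional factor $D_I$; then $v_1\not<D_I$ and Proposition \ref{fil}(b) does not apply directly. The key observation is that the dual exponents $j_r$ are calibrated exactly to the standardness bounds of Definition \ref{standard}, so that the combined exponent of a shared $D_I$ in $v_1\cdot v_2^*$ exceeds the bound $\min(|I|-2,\,\dots)$ permitted for a standard monomial. I would then feed this product into the relations collected in \ref{R} — the Chern-polynomial relation $P_{X_I/X^n}(-D_I)=0$ together with the kernel relations $x\cdot D_I=0$ for $x\in\ker(i_I^*)$ and the orthogonality $D_I\cdot D_J=0$ for non-nested, non-disjoint $I,J$ — to rewrite $v_1\cdot v_2^*$ as a $\mathbb{Q}$-combination of monomials whose $D$-parts are strictly smaller in the ordering of Definition \ref{<}. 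An induction on this ordering reduces every term to the clean case above, where Proposition \ref{fil}(b) concludes.

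The main obstacle is this final reduction: one must verify that each application of the Chern-polynomial and kernel relations strictly decreases the $D$-part, so that the induction terminates, while never escaping the filtration regime in which $p+d>n$ continues to hold, so that the vanishing is preserved at each step. Controlling the interaction between the exponent-lowering relations and the filtration degree $p$ — equivalently, checking that reducing a shared $D_I$ does not destroy the estimate coming from $p(v_2^*)=(n-d)+\deg D(v_2)$ — is where the bulk of the careful case analysis (parallel to the corresponding argument in [T1]) will be needed.
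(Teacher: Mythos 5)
Your opening reduction is sound: the telescoping identity $\sum_r (i_r+j_r)=\sum_{r=1}^{s}(|J_r|-1)$ over each tree of the forest does give $p(v_2^*)=(n-d)+\deg D(v_2)>n-d$, so Proposition \ref{fil}(b) applies to the pair $(v,w)=(v_2^*,v_1)$ the moment $v_1\ll v_2^*$. But your case division after that has a genuine gap. Unwinding Definition \ref{<}, the condition $v_1<D_I$ says that $v_1$ carries no exceptional factor $D_J$ with $J\le I$; hence $v_1\ll v_2^*$ forces \emph{every} exceptional factor of $v_1$ to exceed \emph{every} exceptional factor of $v_2$, which is much stronger than the absence of shared factors. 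Concretely, take $I_a<I_b<I_c$ with $D(v_2)=D_{I_a}D_{I_c}$ and $D(v_1)=D_{I_b}$: the factors are disjoint and $D(v_1)<D(v_2)$ (the exponents first differ at the smallest set $I_a$, where $v_1$ has exponent $0<1$), yet $v_1\not<D_{I_c}$ because of the factor $D_{I_b}$ with $I_b<I_c$. So your ``clean case'' does not cover configurations it claims to cover, and the dichotomy shared/not-shared is not the right one.

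The strategy you propose for the remaining case also diverges from the paper's and does not close. The paper (deferring to Proposition 5.16 of [T1]) does not keep the pair $(v_2^*,v_1)$ and rewrite; it \emph{regroups} the factors of the single degree-$n$ monomial $v_1\cdot v_2^*$ into a new product $v\cdot w$ satisfying the hypotheses of Proposition \ref{fil}(b) --- roughly, $v$ absorbs the exceptional factors indexed by sets up to the first index $I_{r_0}$ at which $D(v_1)$ and $D(v_2)$ differ, where the strict inequality $i'_{r_0}<i_{r_0}$ is exactly what keeps the combined exponent within the standard bound and yields $p(v)+\deg(w)>n$ --- and then applies \ref{fil}(b) once. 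Your alternative (lower shared exponents via the relations of \ref{R} and induct on the order) has two problems. First, the combined exponent $i'_r+j_r$ exceeds the standardness bound only when $i'_r\ge i_r$; at the comparison index it is already within the bound, so there is nothing to rewrite there and the claimed ``automatic'' violation fails. Second, after applying the relations, the resulting degree-$n$ monomials are no longer products of complementary standard monomials with comparable $D$-parts, so the inductive hypothesis does not apply to them: expressing a degree-$n$ class in standard form merely exhibits it as a multiple of $\prod_{i=1}^n a_i$, which is precisely the quantity whose vanishing is at stake, so the induction as described is circular. The regrouping argument, not a rewriting induction, is what is needed here.
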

\begin{proof}
The monomial $v_1 \cdot v_2^*$ can be written as a product $v \cdot w$, for $v,w \in R^*(X[n])$ satisfying the properties given in Proposition \ref{fil}. For the argument see Proposition 5.16 in [T1].
\end{proof}

\begin{thm}\label{Ful}
For any $0 \leq d \leq n$, the pairing $$R^d(X[n]) \times R^{n-d}(X[n]) \rightarrow \mathbb{Q}$$ is perfect. In other words, the tautological ring $R^*(X[n])$ is a Gorenstein algebra with socle in degree $n$.
\end{thm}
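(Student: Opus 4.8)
The plan is to transpose the argument of Theorem \ref{C} to the blown-up setting, using the combinatorics of standard monomials together with the explicit involution $*$. By Remark \ref{st} the group $R^d(X[n])$ is spanned by the standard monomials of degree $d$, and by Remark \ref{*} the map $v \mapsto v^*$ is a bijection between standard monomials of degree $d$ and those of degree $n-d$. It therefore suffices to prove that the square matrix $M$ with entries $v \cdot w^*$, indexed by standard monomials $v,w$ of degree $d$, is nonsingular. Once this is shown the pairing $R^d(X[n]) \times R^{n-d}(X[n]) \to \mathbb{Q}$ is perfect; taking $d=0$ forces $R^n(X[n]) \cong \mathbb{Q}$ (recall $\dim X[n]=n$, as $X[n]$ is birational to $X^n$ with $\dim X = 1$), so the socle lies in degree $n$ and the Gorenstein property follows.

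The first step is to read off the block-triangular shape of $M$. Ordering the standard monomials of a fixed degree by the total order $<$ on their divisorial parts $D(v)$ from Definition \ref{<}, Proposition \ref{tri} gives $v \cdot w^* = 0$ whenever $D(v) < D(w)$. Hence $M$ is block lower-triangular, the diagonal blocks being indexed by a common divisorial part $D = \prod_{r} D_{I_r}^{i_r}$: a diagonal block collects the products $v \cdot w^*$ for $v = a(v)b(v)\,D$ and $w = a(w)b(w)\,D$ sharing the same associated forest. Nonsingularity of $M$ thus reduces to nonsingularity of each diagonal block.

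The second step is to identify each diagonal block, up to sign, with an intersection matrix of the $R^*(X^S)$ pairing. For a fixed forest the exceptional factors pair against their duals at the top admissible powers $i_r + j_r$ prescribed by Definition \ref{dual}; applying Keel's formula together with the Chern polynomials $P_{X_I/X^n}$ and the self-intersection relations recorded in \ref{R}, the $D$-part contributes a single nonzero normalization (the sign emanating from $P_{Z/Y}(-E)$), and the surviving factor is exactly the pairing of $a(v)b(v)$ against $a(w^*)b(w^*)$ computed inside the product $X^S$ over the set $S$ of Definition \ref{standard}. By Theorem \ref{C}(B) this pairing is perfect, so each diagonal block is nonsingular; combined with the triangular shape this makes $M$ nonsingular and proves the theorem.

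The main obstacle is precisely the middle step: confirming that the iterated self-intersections of the exceptional divisors on $X[n]$ collapse cleanly, so that a diagonal block genuinely reproduces the $X^S$-intersection matrix of Theorem \ref{C} rather than an obstructed deformation of it. This requires careful bookkeeping of the blow-up tower, tracking how the Chern polynomial of each center transforms under later blow-ups and verifying that the exponents $j_r$ of Definition \ref{dual} are exactly the complementary powers realizing the top pairing on each exceptional locus. This is the analogue of the corresponding computation in [T1], and the forest formalism of Definition \ref{Graph} — closures, degrees, and the distinction between internal and external vertices — is what organizes the accounting.
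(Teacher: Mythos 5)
Your proposal follows essentially the same route as the paper: span by standard monomials, the involution $*$ pairing complementary degrees, block-triangularity from Proposition \ref{tri}, and identification of each diagonal block (up to sign) with the $R^*(X^S)$ intersection matrix, which is handled by Theorem \ref{C}(B). The only small caveat is that $R^n(X[n]) \cong \mathbb{Q}$ is an input to the argument rather than a consequence of the $d=0$ case (the paper notes separately that every degree-$n$ class is a multiple of $\prod_{i=1}^n a_i$), but otherwise the approach and its execution match the paper's proof.
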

\begin{proof}
As we have observed in Remark \ref{st}, the tautological group $R^d(X[n])$ is generated by standard monomials. 
By Remark \ref{*}, the involution * gives a one-to-one correspondence between standard monomials in degrees $d$ and $n-d$. 
We will prove that the resulting intersection matrix $(v_i \cdot v_j^*)$ is invertible, where the $v_i,v_j$ vary over the set of standard monomials in degree $d$. 
From the proven result in Proposition \ref{tri}, it is enough to show the invertibility of the intersection matrix consisting of all such $v$'s having the same $D$-part.
Let $$D \in \mathbb{Q}[D_I: I \subset \{1, \etc , n \} \ \mathrm{and} \ |I| \geq 3]$$ be a monomial. 
Denote by $\mathcal{G}$ the graph associated to the monomial $D$ and define the subset $S$ of the set $\{1, \etc , n\}$ as in Definition \ref{standard}. 
Now suppose that $v_i,v_j$ satisfy $D(v_i)=D(v_j)=D$.
From the study of the intersection ring of $X[n]$, it is easy to see that the intersection numbers
$$v_i \cdot v_j^* \in R^n(X[n]) = \mathbb{Q}$$ and the number $$a(v_i)b(v_i) \cdot a(v_j^*) b(v_j^*) \in R^{|S|}(X^S) = \mathbb{Q}$$
differ by $(-1)^{\varepsilon}$, where $\varepsilon=| \cup_{r=1}^m I_r| + \sum_{i \in V(\mathcal{G})} \deg(i)$. 
This shows that the intersection matrices $(v_i \cdot v_j^*)$ and $(a(v_i) b(v_i) \cdot a(v_j^*) b(v_j^*))$, for standard monomials having the same $D$-part $D$, are the same up to a sign. 
According to the study of the tautological ring of $X^S$ in the first section, we know that the kernel of the later matrix is generated by relations in $R^*(X^S)$.
After choosing a basis for $R^{d- \deg(D)}(X^S)$, the resulting intersection matrix is invertible. This proves the claim. 
\end{proof}

\begin{rem} 
In the proof of the theorem we used the fact that the tautological group in top degree is isomorphic to $\mathbb{Q}$. It is indeed easy to see that every element of degree $n$
in $R^*(X[n])$ is a multiple of the class $\prod_{i=1}^n a_i$. 
\end{rem}

\section{The tautological ring $R^*(M_{2,n}^{rt})$}
In this section we will prove that the tautological ring of $M_{2,n}^{rt}$ is isomorphic to the ring $R^*(X[n])$ studied in the previous section. 
As we have seen in the second section, there is a morphism $$F:X[n] \rightarrow M_{2,n}^{rt},$$ induced from the family of curves $\pi:X[n+1] \rightarrow X[n]$. The morphism $F$ induces the pull-back homomorphism 
$$F^*: A^*(M_{2,n}^{rt}) \rightarrow A^*(X[n]).$$ We want to see that the homomorphism $F^*$ sends the tautological classes to the tautological classes, and hence, it induces a ring homomorphism between the tautological rings involved. 
Notice that the tautological ring of $M_{2,n}^{rt}$ is generated by the divisor classes $\psi_i$ and $D_I$. Let us recall the definition of the divisor $D_I$ for a subset $I \subset \{1, \etc,n\}$ 
with $|I| \geq 2$: The generic curve parameterized by $D_I$ has two components. One component is of genus two and the other component is a rational curve. The elements of $I$ correspond to the markings on the rational component. 
It is easy to see that $$ F^*(D_{i,j})=d_{i,j}-\sum_{i,j \in I}D_I,  \qquad F^*(D_I)=D_{I} \ \mathrm{where} \ |I| \geq 3.$$
We denote the class $d_{i,j}-\sum_{i,j \in I}D_I$ in $R^1(X[n])$ by the letter $D_{i,j}$ so that the equality $F^*(D_I)=D_I$ holds for every subset $I$ of the set $\{1, \etc , n\}$ with at least two elements. 
Using this notation, the pull-back of the class $\psi_i$ via $F$ is computed as $K_i+\sum_{i \in I}D_I$. 
We conclude that the pull-backs of the tautological classes on $M_{2,n}^{rt}$ along the morphism $F$ belong to the tautological ring of the space $X[n]$.
This observation also shows that $F^*$ is a surjection onto the tautological algebra $R^*(X[n])$. 

The {\em injectivity} of $F^*$ is proven by verifying that the tautological classes $\psi_i$ and $D_I$ on $M_{2,n}^{rt}$ satisfy all the relations which hold among their images via $F^*$. 

\begin{notation}
We define the tautological classes $a_i,b_{j,k}$ on the moduli space $M_{2,n}^{rt}$ as follows: First, denote by $K_i$ the class $\psi_i-\sum_{i \in I} D_I$ and by $d_{j,k}$ the class $\sum_{j,k \in I}D_I$. 
The classes $a_i,b_{j,k}$ are defined via the equations $K_i=2a_i$ and $b_{j,k}=d_{j,k}-a_j-a_k$.
\end{notation}

The verification of the relations on the moduli space is done in several steps:

\begin{itemize}
\item
We first consider the relations among the generators of $R^*(X^n)$: The first relation is $a_1^2=0$. The desired relation on $M_{2,1}^{rt}$ follows from the study of the Chow ring of $\overline{M}_{2,1}$ by Faber. In [F1] Faber finds a relation in $A^2(\overline{M}_{2,1})$, which expresses $\psi_1^2$ in terms of the boundary cycles on the complement of $M_{2,1}^{rt}$. This means that $\psi_1^2=0$ as an element in $A^2(M_{2,1}^{rt})$. 
The result follows since on $M_{2,1}^{rt}$ the class $\psi_1$ is $K_1=2a_1$. 

Then we look at the relations in $R^*(X^2)$: In [G] a topological recursion formula in genus two is proven. That relation gives an explicit formula for $\psi_1 \psi_2$ in terms of the boundary cycles in $R^2(\overline{M}_{2,2})$. The restriction of that relation
to $M_{2,2}^{rt}$ gives $\psi_1 \psi_2 +3D_{1,2}^2=0$. Using the fact that the products $\psi_1 \cdot D_{1,2}, \psi_2 \cdot D_{1,2}$ are zero, we get that $d_{1,2}^2=-2a_1a_2$. This equation together with $\psi_1 \cdot D_{1,2}=0$ gives the relation 
$a_1 b_{1,2}=0$. The equality $b_{1,2}^2=-4a_1a_2$ follows from these relations. 

There are two ways to observe the relation $b_{1,2}b_{1,3}=a_1b_{2,3}$ on the moduli space: 
The pull-back of the relation \eqref{F} to $R^2(M_{2,3}^{rt})$ via the morphism $M_{2,3}^{rt} \rightarrow C_2^3$ is twice 
$b_{1,2}b_{1,3}-a_1b_{2,3}=0$.
The second proof follows from the the relation found by Belorousski-Pandharipande in $R^2(\overline{M}_{2,3})$:
The restriction of the relation in [BP] to the space $M_{2,3}^{rt}$ is 12 times $a_1b_{2,3}-b_{1,2}b_{1,3}=0$.
The next case is treated similarly using the relation we proved in $R^3(C_2^6)$: We just need to consider the pull-back of the relation \eqref{6} to $R^3(M_{2,6}^{rt})$ via the morphism $M_{2,6}^{rt} \rightarrow C_2^6$.

\item We now consider the first class of relations among the divisor classes $D_I$. It is easy to see that the product $D_I \cdot D_J \in R^2(M_{2,n}^{rt})$ is zero unless 
$$(**) \qquad I \subseteq J, \mathrm{or} \qquad  J \subseteq I, \mathrm{or} \qquad I \cap J = \emptyset.$$
Notice that this holds even in $R^*(\overline{M}_{2,n})$ for all subsets $I,J$ with at least two elements. In $(*)$ we only consider the case of subsets with at least three elements. 
 
\item
Let $I \subset \{1,\etc,n\}$ be a subset with $|I| \geq 3$ and $i_I:X_I \rightarrow X^n$ denote the inclusion. The $\ker(i_I^*)$ is generated by the divisor classes $d_{j,k}+2a_j$ and $d_{i,j}-d_{i,k}$, for $i \in I^c$ and distinct elements $j,k \in I$.  
We will prove that $( \psi_j - \sum_{j \in J, k \notin J} D_J) \cdot D_I$ and $(\sum_{i,j \in J} D_J - \sum_{i,k \in J} D_J) \cdot D_I$ are zero in $R^2(M_{2,n}^{rt})$. 
The first equality follows from the well-known formula for the $\psi$ classes in genus zero, which we recall: 
Let $i \in \{1, \etc , n\}$ be an element and assume that $j,k \in \{1, \etc, n\} \backslash \{i\}$ are arbitrary distinct elements. Then one has the following equality in 
$A^1(\overline{M}_{0,n})$: $$\psi_i=\sum_{\substack{i \in I \\j,k \notin I}}D_I.$$ 
The second relation is an easy implication of the relations $(**)$ in the previous paragraph.  

\item Let $V=V_1 \cap \dots \cap V_k,W$ and $Z$ be subvarieties of $X^n$ as in \ref{R}, so that $V \cap W=Z$. After possibly relabeling the indices, we can assume that 
$$Z=X_{I_0}, \qquad V_i=X_{I_i}, \ \mathrm{for} \ 1 \leq i \leq k, \qquad W=\prod_{i=2}^{r_1} d_{1,i} \cdot \prod_{j=1}^k d_{1,r_j+1} ,$$
where $1 \leq r_1 < \dots < r_{k+1} \leq n, I_0=\{1, \dots ,r_{k+1}\}$, and $I_i=\{r_i+1, \dots , r_{i+1}\}$ for $1 \leq i \leq k$. 
From these data we get the relation 
$$P_{W/X^n}(-\sum_{I_0 \subseteq I} D_I) \cdot \prod_{i=1}^k D_{I_i}=0$$ on the space $X[n]$,
for $$P_{W/X^n}(t)=\prod_{i=2}^{r_1} (t+d_{1,i}) \cdot \prod_{j=1}^k (t+d_{1,r_j+1}).$$ 
We want to prove a similar identity on the moduli space $M_{2,n}^{rt}$.
This is proven by showing that any monomial in the expansion of this expression is zero. The proof is easy and can be found in [T1].

\item Let $I \subset \{1,\etc,n\}$ be a subset with $|I| \geq 3$ corresponding to the subvariety $Z=X_I$ of $X^n$. The relation $P_{Z/X^n}(-\sum_{I \subseteq J}D_J)=0$ on the space $X[n]$ holds. 
To prove the corresponding relation on the moduli space we will show that the product $$\prod_{i \neq j \in I} (d_{i,j}- \sum_{I \subseteq J} D_J)$$ is zero, 
where $i \in I$ is an arbitrary element.
This is verified as in the previous case by showing that the monomials occurring in the expansion of the expression above are all zero. 
\end{itemize}

The argument above proves that $F^*$ is indeed an isomorphism. This gives the description of the tautological ring $R^*(M_{2,n}^{rt})$ in terms of generators and relations. 
In particular, from the proven result in Theorem \ref{Ful}, we finish the proof of Theorem \ref{rt}.

\end{document}